\newtheorem{lma}{Lemma}[section]
\newaliascnt{thmCt}{lma}
\newtheorem{thm}[thmCt]{Theorem}
\newaliascnt{corCt}{lma}
\newtheorem{cor}[corCt]{Corollary}
\newaliascnt{propCt}{lma}
\newtheorem{prop}[propCt]{Proposition}
\newtheorem*{thm*}{Theorem}
\newtheorem*{dfn*}{Definition}
\newtheorem*{cor*}{Corollary}
\newtheorem*{rmk*}{Remark}
\newtheorem*{prop*}{Proposition}
\theoremstyle{definition}
\newaliascnt{prgCt}{lma}
\newtheorem{prg}[prgCt]{}
\newaliascnt{dfnCt}{lma}
\newaliascnt{rmkCt}{lma}
\newaliascnt{rmksCt}{lma}
\newaliascnt{ntnCt}{lma}
\newaliascnt{qstCt}{lma}
\newaliascnt{prblCt}{lma}
\newaliascnt{exaCt}{lma}
\newcommand{\T}{\mathbb{T}}
\newcommand{\N}{\mathbb{N}}
\newcommand{\Z}{\mathbb{Z}}
\newcommand{\K}{\mathrm{K}}
\DeclareMathOperator{\im}{im}
\DeclareMathOperator{\rank}{rank}
\DeclareMathOperator{\card}{card}
\DeclareMathOperator{\spectrum}{sp}
\DeclareMathOperator{\her}{her}
\DeclareMathOperator{\Inv}{Inv}
\newcommand{\alg}{\mathrm{alg}}
\newcommand{\CatCa}{C^*}
\newcommand{\CatPoM}{\mathrm{PoM}}
\newcommand{\CatoM}{\mathrm{Mon}_\leq}
\DeclareMathOperator{\sr}{sr}
\DeclareMathOperator{\Gr}{Gr}
\DeclareMathOperator{\Lat}{Lat}
\DeclareMathOperator{\AbGp}{AbGp}
\DeclareMathOperator{\Cu}{Cu}
\DeclareMathOperator{\rr}{rr}
\DeclareMathOperator{\NCCW}{NCCW}
\DeclareMathOperator{\CW}{CW}
\DeclareMathOperator{\AI}{AI}
\DeclareMathOperator{\A}{A}
\DeclareMathOperator{\AH}{AH}
\DeclareMathOperator{\Lsc}{Lsc}
\DeclareMathOperator{\supp}{supp}
\DeclareMathOperator{\Hom}{Hom}
\DeclareMathOperator{\id}{id}
\newcommand{\hooklongrightarrow}{\lhook\joinrel\longrightarrow}
\begin{document}
\onehalfspacing

\title{The unitary Cuntz semigroup on the classification of non-simple $\CatCa$-algebras}

\author{Laurent Cantier}

\address{Laurent Cantier,
Departament de Matem\`{a}tiques \\
Universitat Aut\`{o}noma de Barcelona \\
08193 Bellaterra, Barcelona, Spain
}
\email[]{lcantier@mat.uab.cat}

\thanks{The author was supported by MINECO through the grant BES-2016-077192 and partially supported by the projects MDM-2014-0445 and MTM-2017-83487-POP at the Centre de Recerca Matem\`atica in Barcelona.}
\keywords{Unitary Cuntz semigroup, one-dimensional $\NCCW$, $\Cu$-metric, $\K_1$-obstructions}

\begin{abstract}
This paper argues that the unitary Cuntz semigroup, introduced in \cite{C21} and termed $\Cu_1$, contains crucial information regarding the classification of non-simple $\CatCa$-algebras.  We exhibit two non-simple $\CatCa$-algebras that agree on their $\K_1$-groups and their Cuntz semigroups (termed $\Cu$) and yet disagree at level of their unitary Cuntz semigroups. In the process, we establish that the unitary Cuntz semigroup contains rigorously more information about non-simple $\CatCa$-algebras than $\Cu$ and $\K_1$ alone.
\end{abstract}
\maketitle

%%%%%%%%% Intro
\section{Introduction}
The classification problem for $\CatCa$-algebras, inspired by the results obtained for von Neumann algebras, has been mainly focused on the simple case. Nowadays, through the work of numerous people, gathering decades of research, the Elliott classification program has provided satisfactory results. See e.g. \cite{EGLN21}, \cite{GLN1} and \cite{GLN2} More specifically, the original Elliott invariant constructed from $\K$-Theory and traces, is a complete invariant for simple, separable, unital, nuclear, $\mathcal{Z}$-stable $\CatCa$-algebras satisfying the UCT assumption. Besides, this invariant has been the foundation for classification of certain non-simple $\CatCa$-algebras. By adding up new ingredients, Gong, Jiang and Li (based on the work of many others) have reformulated more complete versions of the Elliott invariant in the aim of classifying $\CatCa$-algebras beyond the simple case. (See e.g. \cite{JJ11}, \cite{GJL20}.) Among these new components, we may find the affine maps from the tracial space of corner algebras (with compatibility axioms) and we may observe that the total $\K$-Theory comes to replace the original $\K_*$-group. A first version obtained, denoted by $\Inv^{0}$, partially classifies $\AH$-algebras with ideal property and no dimension growth, a large class of $\CatCa$-algebras that contains both simple and real rank zero $\AH$-algebras. Later, in \cite{GJL19} and \cite{GJL20}, they round off the classification of all $\AH$-algebras with ideal property and no dimension growth by further refining of their invariant, now termed $\Inv$, by adding Hausdorffified $\K_1$-groups of corner algebras (with some more compatibility axioms). 

In the meantime, it has been established that the Cuntz semigroup, introduced in \cite{C78} as an analogue of the Murray von-Neumann semigroup for $\CatCa$-algebras, encodes a significant amount of information about a $\CatCa$-algebra. This semigroup, built upon equivalent classes of positive elements, did not receive the appropriate attention it deserved back in the late 70's: its computation is rather complex (in opposition to $\K$-theoretical invariants) and the original definition did not preserve inductive limits. However, in the late 00's, Toms exhibited a counter-example to the original Elliott conjecture (see \cite{T08}) where the extra data needed to distinguish the $\CatCa$-algebras was encapsulated in the Cuntz semigroup. Since then, a \textquoteleft completed version\textquoteright\ (termed $\Cu$) has been introduced in \cite{CEI08} and has been extensively studied since. Currently, it is well-established that the Cuntz semigroup is a powerful tool for classification of $\CatCa$-algebras (beyond the simple case) and the assets it contains have not been fully unraveled yet. For instance, it has been shown in \cite{ADPS14} that the Cuntz semigroup of the tensor product of $\mathcal{C}(\T)$ with any unital, simple, separable, nuclear, finite, $\mathcal{Z}$-stable $\CatCa$-algebra $A$ is naturally isomorphic to the original Elliott invariant of $A$. Furthermore, the Cuntz semigroup entirely captures the lattice of ideals of any separable $\CatCa$-algebra $A$ through the assignment $I\longmapsto \Cu(I)$ which defines a complete lattice isomorphism from the complete lattice of ideals of $A$ into the complete lattice of $\Cu$-ideals of $\Cu(A)$. Lastly, Robert was able to classify all the unital one-dimensional $\NCCW$ complexes with trivial $\K_1$-group and inductive limits of such building blocks by means of the functor $\Cu$. (See \cite{R12}.) However, it is expected that the Cuntz semigroup cannot capture $\K_1$-related information. To overcome obstacle, the author introduced in \cite{C21}, a unitary version of the Cuntz semigroup, written $\Cu_1$, for separable $\CatCa$-algebras with stable rank one. Since, the author has been intensively studying functorial properties, computations, ideals and exactness properties of this new invariant that are reminded in what follows. We refer the reader to \cite{C21} and \cite{C21b} for more details. 

This paper points out extra data encoded in the unitary version of the Cuntz semigroup that cannot be retrieve from the Cuntz semigroup and the $\K_1$-group alone. A fortiori, we show that the unitary Cuntz semigroup is a promising candidate for classification of $\CatCa$-algebras of stable rank one, outside the simple case and beyond trivial $\K_1$ case. For that matter, we exhibit two non-simple unital separable $\CatCa$-algebras $A$ and $B$ that are not isomorphic. $\K$-Theory and the Cuntz semigroup cannot distinguish these $\CatCa$-algebras, and yet additional information contained in $\Cu_1$ is enough to show that $A\nsimeq B$. \\

\textbf{Organization of the paper.} In a first part, we recall the definition and main properties of the Cuntz semigroup and its unitary version. We also recall a metric for $\Cu$-morphisms that has been introduced by the author in \cite{C22}. We finally recall the construction of Evans-Kishimoto folding interval algebras that will play a key-role in the proof of the main theorem of the paper that we state below.
\begin{thm*}
There exist two non-simple, separable, unital $\AH$-algebras of stable rank one $A$ and $B$ such that 
\[
\left\{
\begin{array}{ll} 
\Cu(A)\simeq \Cu(B).\\
\K_1(A)\simeq \K_1(B).
\end{array}
 \right.
\vspace{0,3cm}\] 
Yet $A$ and $B$ are not isomorphic since they are distinguished by their unitary Cuntz semigroup. Moreover, both $A$ and $B$ are inductive limits of $1$-dimensional $\NCCW$-complexes. 
\end{thm*}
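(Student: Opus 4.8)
The plan is to realise $A=\varinjlim(A_n,\phi_n)$ and $B=\varinjlim(B_n,\psi_n)$ over a common sequence $A_n=B_n$ of the Evans--Kishimoto folding interval algebras recalled above, so that each block is a unital $1$-dimensional $\NCCW$-complex of stable rank one whose boundary and folding identifications produce a prescribed non-trivial ideal lattice. Both systems will carry the \emph{same} folding data -- identical eigenvalue (spectral) maps and multiplicities -- and will differ only through continuous paths of unitaries inserted in the folded embeddings, of the shape $\phi_n(f)=u_n\,\diag(f\circ\lambda^{(n)}_i)\,u_n^{*}$ versus $\psi_n(f)=v_n\,\diag(f\circ\lambda^{(n)}_i)\,v_n^{*}$. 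The unitaries $u_n,v_n$ will be chosen so that their induced $\Cu$- and $\K_1$-morphisms coincide, while the de la Harpe--Skandalis determinant datum they carry -- the $\Aff(T)$-part of the Hausdorffified $\K_1$ -- is distributed differently across the ideals, with vanishing net contribution to global $\K_1$.

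First I would dispatch the two easy agreements. Since Cuntz comparison is invariant under conjugation by a unitary and the folding data is identical, $\Cu(\phi_n)=\Cu(\psi_n)$ as $\Cu$-morphisms $\Cu(A_n)\to\Cu(A_{n+1})$; the $\Cu$-metric of \cite{C22} then upgrades this blockwise equality to an approximate intertwining of the two systems and hence to a genuine isomorphism $\Cu(A)\simeq\Cu(B)$. Because the inserted unitaries are trivial in $\K_1$, one also has $\K_1(\phi_n)=\K_1(\psi_n)$, so continuity of $\K_1$ under inductive limits yields $\K_1(A)\simeq\K_1(B)$. Arranging that $A$ and $B$ are genuinely non-simple -- several ideals arising from the folding identifications -- is a bookkeeping matter handled entirely at the level of the blocks.

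The crux is the separation by $\Cu_1$. Here I would compute $\Cu_1(A)$ and $\Cu_1(B)$ as inductive limits in the category of $\Cu_1$-objects, tracking not merely the Cuntz class of each positive element but the homotopy class of the unitary, and thus its determinant winding, attached to it over each ideal. Because $\Cu_1$ records precisely how the Hausdorffified-$\K_1$ datum sits over Cuntz classes of positive elements generating distinct ideals -- information not recoverable from $\Cu$ and $\K_1$ alone -- the differently distributed winding of $u_n$ versus $v_n$ should accumulate, in the limit, to a genuinely different assignment of unitary classes to Cuntz classes. The goal is to exhibit a concrete, computable $\Cu_1$-feature, for instance the value taken by the induced map into the unitary (Hausdorffified) $\K_1$ of a fixed corner on a distinguished Cuntz class, that differs between $A$ and $B$ and is preserved by every $\Cu_1$-isomorphism. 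By functoriality of $\Cu_1$ this forces $\Cu_1(A)\not\simeq\Cu_1(B)$, whence $A\not\simeq B$.

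The hard part will be the persistence and invariance of the distinguishing datum in the limit, rather than its mere existence at finite stages. Concretely, I expect the difficulty to lie in controlling how the attached unitaries, and in particular their determinant winding, transform under the orientation-reversing folding sheets when the connecting maps are iterated: an orientation-reversing $\lambda^{(n)}_i$ inverts a winding, so the parities and placements of the folds must be balanced so that the net effect cancels in $\Cu$ and in global $\K_1$ yet survives in the ideal-localised $\Cu_1$ datum. Making this cancellation precise, and then ruling out \emph{every} possible $\Cu_1$-isomorphism and not only those respecting the given presentations, is where the substance of the argument sits; the $\Cu$-metric of \cite{C22} is exactly what renders the $\Cu$-level intertwining rigorous while leaving the $\Cu_1$-level discrepancy intact.
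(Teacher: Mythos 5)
Your construction has a structural flaw that no choice of the unitaries $u_n,v_n$ can repair. Since your two systems share the same blocks $A_n=B_n$ and the same spectral/multiplicity data, your connecting maps satisfy $\psi_n=\mathrm{Ad}(w_n)\circ\phi_n$ with $w_n:=v_nu_n^{*}$ a unitary of the unital codomain block. Setting $c_0:=1$ and $c_{n+1}:=w_n\,\phi_n(c_n)$, one checks $\mathrm{Ad}(c_{n+1})\circ\phi_n=\mathrm{Ad}(w_n\phi_n(c_n))\circ\phi_n=\psi_n\circ\mathrm{Ad}(c_n)$, so the inner automorphisms $\mathrm{Ad}(c_n)$ give an \emph{exact} (not merely approximate) intertwining of the two systems and hence $A\simeq B$ on the nose. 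Your separation mechanism also misfires independently of this: inner automorphisms act trivially on algebraic $\K_1$ and a fortiori on its Hausdorffification, and, more to the point, $\Cu_1$ as defined in this paper records only the homotopy class of the unitary over each hereditary subalgebra --- in the ideal picture, $\bigsqcup_{I\in\Lat_f(A)}\Cu_f(I)\times\K_1(I)$ --- so it carries no de la Harpe--Skandalis/$\Aff(T)$ determinant component at all. Even a variant of your scheme that evaded the exact intertwining would thus be trying to distinguish $A$ and $B$ by data that $\Cu_1$ does not contain.

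The paper's mechanism is genuinely different: it perturbs not a unitary conjugation but the \emph{projection in the folding step}. With $q_i=p_ip_{i-1}$, the map $f\mapsto f(\xi_0)\otimes e+f(\xi_1)\otimes(1-e)$ on the Evans--Kishimoto algebra $\mathcal{I}^l_{q_i,e}$ induces multiplication by $\rank(e)$ on $\K_1\simeq\Z/q_i\Z$; taking $\rank(e_A^i)=p_{i-1}$ for $A$ and $\rank(e_B^i)=p_i$ for $B$ leaves the block-level invariants unchanged but makes the simple ideals satisfy $\K_1(\mathfrak{s}_i)\simeq\Z/p_i\Z$ versus $\K_1(\mathfrak{t}_i)\simeq\Z/p_{i-1}\Z$, while $\K_0(\mathfrak{s}_i)\simeq\K_0(\mathfrak{t}_i)\simeq\Z[\frac{1}{q_i}]$. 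A shift identifies $\underset{i}\oplus\,\Z/p_i\Z$ with $\underset{i}\oplus\,\Z/p_{i-1}\Z$ (using $p_{-1}=1$), giving $\K_1(A)\simeq\K_1(B)$. Note also that, unlike in your plan, the blockwise $\Cu$-morphisms are \emph{not} equal: the folded summands have classes with weights $(p_{i-1},p_i)$ versus $(p_i,p_{i-1})$; they are only metrically close, with $dd_{\Cu,\mathcal{B}_{2^\infty}}(\alpha_{nn+1},\beta_{nn+1})\leq 1/2^n$ because the many interior point-evaluations dominate, and the approximate intertwining theorem then yields $\Cu(A)\simeq\Cu(B)$. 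Finally, the paper rules out \emph{every} $\Cu^\sim$-isomorphism cleanly: such an isomorphism induces a lattice isomorphism matching simple ideals and, via the exact rows $0\to\Cu(I)\to\Cu_1(I)\to\K_1(I)\to 0$, simultaneous isomorphisms of their $\Cu$ (hence $\K_0$) and $\K_1$; the arithmetic then forces $j=i$ from $\Z[\frac{1}{q_i}]$ and $j=i+1$ from the torsion, a contradiction. The extra information $\Cu_1$ sees is this mismatched pairing of ideal-wise $\K_0$ and $\K_1$, not determinant winding.
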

\break

\textbf{Acknowledgements.} The author is indebted to R. Antoine and F. Perera for pertinent comments through the elaboration of this paper. More particularly, for fixing an issue in the computation of $\Cu$-distances between the two inductive sequences. Further, the author would like to thank G. Elliott, J. Gabe and A. Tikuisis for feedbacks that led to the remarks ending this paper. Finally, the author would like to thank the referee for his/her thorough proofreading.
\section{Preliminaries}

We use $\AbGp$ to denote the category of abelian groups. We use $\CatoM$ to denote the category of ordered monoids, in contrast to the category of positively ordered monoids, that we write $\CatPoM$. We also use $\CatCa_{\sr1}$ to denote the full subcategory of $\CatCa$-algebras of stable rank one.

\subsection{The Cuntz semigroup and its unitary version}
The unitary Cuntz semigroup is an enhancement of the original Cuntz semigroup. More precisely, the Cuntz semigroup being, roughly speaking, equivalence classes of positive elements, its unitary version incorporates additional ingredients from the $\K_1$-group, via unitary elements. In the sequel, we recall both constructions of the Cuntz semigroup and its unitary version, as well as basic properties of the two latter. We refer the reader to \cite{APS11}, \cite{APT14}, \cite{APRT21}, \cite{APT09}, \cite{C22}, \cite{CEI08}, \cite{RS19}, \cite{T19} and \cite{TV21} for the original Cuntz semigroup and to \cite{C21} and \cite{C21b} for the unitary Cuntz semigroup. Even though the original Cuntz semigroup is not needed the stable rank one assumption, its unitary version does. For that matter,  all $\CatCa$-algebras are assumed to have stable rank one. 

\begin{prg}\textbf{(Construction of $\Cu(A)$ and $\Cu_1(A)$.)}
Let $A$ be a $\CatCa$-algebra of stable rank one. We denote by $A_+$ the set of positive elements. Let $a$ and $b$ be in $A_+$. We say that $a$ is \emph{Cuntz subequivalent} to $b$ and we write $a\lesssim_{\Cu} b$ if, there exists a sequence $(x_n)_{n\in\N}$ in $A$ such that $a=\lim\limits_{n\in\N}x_nbx_n^*$. Furthermore, in the stable rank one context, whenever $a\lesssim_{\Cu} b$, there exist \emph{standard morphisms} $\theta_{ab}:\her (a)^\sim\hooklongrightarrow \her (b)^\sim$ that all give rise to the same $\K_1$-morphism $\chi_{ab}:=\K_1(\theta_{ab})$. In other words, there is a canonical way (up to homotopy equivalence) to extend unitary elements of $\her (a)^\sim$ into unitary elements of $\her (b)^\sim$. (See \cite[2.2-3.2]{C21} for details.) Now, let $u,v$ be unitary elements of $\her (a)^\sim,\her (b)^\sim$ respectively. We say that $(a,u)$ is \emph{unitarily Cuntz subequivalent} to $(b,v)$, and we write $(a,u)\lesssim_1 (b,v)$, if $a\lesssim_{\Cu} b$ and $\theta_{ab}(u)\sim_h v$. After antisymmetrizing these relations, we get equivalence relations on the sets $(A\otimes\mathcal{K})_+$ and $H(A):=\{(a,u)\mid a\in (A\otimes\mathcal{K})_+ , u\in \mathcal{U}(\her (a)^\sim)\}$, called the \emph{Cuntz equivalence} and the \emph{unitary Cuntz equivalence}, and denoted by $\sim_{\Cu}$ and $\sim_1$ respectively. 
We now construct \[
\Cu(A):= (A\otimes\mathcal{K})_+/\!\!\sim_{\Cu} \text{ and } \Cu_1(A):= H(A)/\!\!\sim_{1}.\] The set $\Cu_1(A)$ can be equipped with a natural order given by $[(a,u)]\leq [(b,v)]$ whenever $(a,u)\lesssim_{1} (b,v)$. Further, let us consider the $^*$-isomorphism $\psi: M_2(A\otimes\mathcal{K})\longrightarrow A\otimes\mathcal{K}$ given by $\psi(\begin{smallmatrix} a & 0\\ 0 & b \end{smallmatrix})=v_1av_1^*+v_2bv_2^*$, where $v_1$ and $v_2$ are two isometries in the multiplier algebra of $A\otimes\mathcal{K}$ such that $v_1v_1^*+v_2v_2^*=1_{M(A\otimes\mathcal{K})}$ and let us write $x \oplus y:=\psi(\begin{smallmatrix} x & 0\\ 0 & y \end{smallmatrix})$. This allows us to set an addition on $\Cu_1(A)$ by $[(a,u)]+[(b,v)]:=[(a\oplus b,u\oplus v)]$. 

In this way, $\Cu_1(A)$ is an ordered monoid called \emph{the unitary Cuntz semigroup of $A$}. (A fortiori, the set $\Cu(A)$ can also be equipped with a natural order and an addition to obtain a positively ordered monoid called the \emph{Cuntz semigroup}.)

Finally, any $^*$-homomorphism $\phi:A\longrightarrow B$ naturally induces a semigroup morphism $\Cu_1(\phi):\Cu_1(A)\longrightarrow\Cu_1(B)$, by sending $[(a,u)]\longmapsto [(\phi\otimes id_\mathcal{K})(a),(\phi\otimes id_\mathcal{K})^\sim(u)]$. 

Therefore, we get a functor $\Cu_1$ from the category $\CatCa_{\sr1}$ into a certain subcategory of $\CatoM$ called the \emph{unitary Cuntz category}, and written $\Cu^\sim$, that we describe next. (Similarly, $\phi$ induces a semigroup morphism $\Cu(\phi):\Cu(A)\longrightarrow \Cu(B)$ and hence are able to define a functor $\Cu$ from the category of $\CatCa$-algebras into a certain subcategory of $\CatPoM$ called the \emph{Cuntz category}, and written $\Cu$.) 
\end{prg}

\begin{prg}\textbf{(The categories $\Cu$ and $\Cu^\sim$.)}
\label{dfn:llCU}
The categories $\Cu$ and $\Cu^\sim$ are subcategories of $\CatPoM$ and $\CatoM$ respectively. As implicitly stated, the category $\Cu$ is a full subcategory of $\Cu^\sim$ consisting of objects in $\Cu^\sim$ that are positively ordered. We hence only recall the definition of the category $\Cu^\sim$. 

Let $(S,\leq)$ be an ordered monoid and let $x,y$ in $S$. We say that $x$ is \emph{way-below} $y$ and we write $x\ll y$ if, for all increasing sequences $(z_n)_{n\in\N}$ in $S$ that have a supremum, if $\sup\limits_{n\in\N} z_n\geq y$, then there exists $k$ such that $z_k\geq x$. This is an auxiliary relation on $S$ called the \emph{way-below relation} or the \emph{compact-containment relation}. In particular $x\ll y$ implies $x\leq y$ and we say that $x$ is a \emph{compact element} whenever $x\ll x$. 

We say that $S$ is an \emph{abstract unitary Cuntz semigroup}, or a $\Cu^\sim$-semigroup, if $S$ satisfies the following order-theoretic axioms: 

(O0): $0\ll 0$.

(O1): Every increasing sequence of elements in $S$ has a supremum. 

(O2): For any $x\in S$, there exists a $\ll$-increasing sequence $(x_n)_{n\in\N}$ in $S$ such that $\sup\limits_{n\in\N} x_n= x$.

(O3): Addition and the compact containment relation are compatible.

(O4): Addition and suprema of increasing sequences are compatible.

A \emph{$\Cu^\sim$-morphism} between two $\Cu^\sim$-semigroups $S,T$ is an ordered monoid morphism that preserves the compact containment relation and suprema of increasing sequences. 

The category of abstract unitary Cuntz semigroups, written $\Cu^\sim$, is the subcategory of $\CatoM$ whose objects are $\Cu^\sim$-semigroups and morphisms are $\Cu^\sim$-morphisms. It is proven in \cite[Corollary 3.21]{C21}, that the functor $\Cu_1$ from the category $\CatCa_{\sr1}$ to the category $\Cu^\sim$ is arbitrarily continuous.
\end{prg}

\begin{prg}\textbf{(Alternative picture of the $\Cu_1$-semigroup.)}
\label{prg:newpicture}
It may be useful to consider an alternative picture of the unitary Cuntz semigroup.
First, recall that for a $\CatCa$-algebra $A$, $\Lat_f(A)$ is the sublattice of $\Lat(A)$ consisting of ideals that contain a full, positive element. Also recall that $\{\sigma\text{-unital ideals of }A\}\subseteq \Lat_f(A)$ and if moreover $A$ is separable, then the converse inclusion holds. Finally, for any $I\in\Lat_f(A)$, we define $\Cu_f(I):=\{x\in \Cu(A) \mid I_x=\Cu(I)\}$ to be the set of full elements in $\Cu(I)$. 

Let $A$ be a $\CatCa$-algebra of stable rank one such that $\Lat_f(A)=\{\sigma\text{-unital ideals of }A\}$.
Then $\Cu_1(A)$ can be pictured as \[\bigsqcup\limits_{I\in\Lat_f(A)} \Cu_f(I)\times \K_1(I)\] that we also write $\Cu_1(A)$. The addition and order are defined as follows: For any $(x,k),(y,l)\in \Cu_1(A)$
\[
\left\{
\begin{array}{ll}
(x,k)\leq (y,l) \text{ if, } x\leq y \text{ and } \delta_{I_xI_{y}}(k)=l.\\
(x,k)+(y,l):=(x+y,\delta_{I_xI_{x+y}}(k)+\delta_{I_yI_{x+y}}(l)).
\end{array}
\right.
\]
where $\delta_{IJ}:=\K_1(I\overset{i}\hooklongrightarrow J)$, for any $I,J\in\Lat_f(A)$ such that $I\subseteq J$.

Let $A,B$ be $\CatCa$-algebras of stable rank one and let $\phi:A\longrightarrow B$ be a $^*$-homomorphism. For any $I\in\Lat_f(A)$, we write $J:=\overline{B\phi(I)B}$, the smallest ideal of $B$ that contains $\phi(I)$. Then $J\in\Lat_f(B)$ and $\Cu_1(\phi)$ can be rewritten as $(\Cu(\phi),\{\K_1(\phi_{|I})\}_{I\in\Lat_f(A)})$, where $\phi_{|I}:I\longrightarrow J$. Observe that we might write $\alpha,\alpha_0, \alpha_I$ to denote $\Cu_1(\phi),\Cu(\phi),\K_1(\phi_{|I})$ respectively.

Note that this way, it is trivial to see that for any (separable) simple $\CatCa$-algebra $A$, we have  \[\Cu_1(A)\simeq \{0\}\sqcup (\Cu(A)\setminus\{0\})\times\K_1(A).\]
\end{prg}

\begin{prg}\textbf{(Positive elements and maximal elements of the $\Cu_1$-semigroup.)} Let $A$ be a $\CatCa$-algebra of stable rank one. It is naturally expected to be able to recover $\Cu(A)$ and $\K_1(A)$ from $\Cu_1(A)$. It has been shown in \cite[3.4 and 5.1]{C21} that there exist functors $\nu_+:\Cu^\sim\longrightarrow \Cu$ and $\nu_{\max}:\Cu^\sim\longrightarrow \AbGp$ that respectively assign to an abstract unitary Cuntz semigroup $S$, its positive cone $S_+:=\{x\in S\mid x\geq 0\}$ and the set of its maximal elements $S_{\max}:=\{x\in S\mid \text{if } y\geq x \text{ then } y=x\}$, satisfying the following natural isomorphisms:
\[
\nu_+\circ\Cu_1\simeq \Cu \hspace{2cm} \nu_{\max}\circ\Cu_1\simeq \K_1.
\]
Moreover, it has been shown in \cite[Theorem 4.13]{C21b} that these objects, considered as $\Cu^\sim$-semigroups, are linked in the following split-exact sequence:
\[
\xymatrix{
0\ar[r]^{} & S_+\ar[r]^{} & S\ar[r]^{} & S_{max}\ar@{.>}@/_{-1pc}/[l]^{}\ar[r]^{} & 0
} 
\]
which leads to 
\[
\xymatrix{
0\ar[r]^{} & \Cu(A)\ar[r]^{} & \Cu_1(A)\ar[r]^{} & \K_1(A)\ar[r]^{} & 0.
} 
\]
\end{prg}

\begin{prg}\textbf{(Ideals in $\Cu$ and $\Cu^\sim$.)} Let $S$ be a $\Cu$-semigroup. An \emph{ideal} of $S$ is a submonoid $I$ that is also a lower set (in the sense that, for any $x,y\in S$ with $x\leq y$ and $y\in I$, then $x\in I$) which is closed under suprema of increasing sequences. We mention that the two latter properties define the closed sets of $S$ for the \emph{Scott topology}, see \cite[Definition 3.2]{C21b}. Furthermore, the ideal $I_x$ generated by an element $x\in S$ (defined as the smallest ideal of $S$ containing $x$) always exists and we have that $I_x=\{y\in S \mid y\leq \infty x\}$.
We denote the set of $\Cu$-ideals of $S$ by $\Lat(S\!)$ and we set $I\wedge J:= I\cap J$ and $I\vee J:=I+J$. It has been shown in \cite[\S 5.1.6]{APT14} that $\Lat(S\!)$ is a complete lattice under these  operations. 

 In a more general context where $S$ is a $\Cu^\sim$-semigroup, things get slightly more complex as a result of the fact that $S$ need not be positively ordered. An \emph{ideal} of $S$ is a submonoid $I$ that is not only Scott-closed but also \emph{positively stable}. (See \cite[Definition 3.9]{C21b} for more details.) Naturally, both definitions agree for $\Cu$-semigroups. Note that the intersection of two $\Cu^\sim$-ideals might not be an ideal and hence, the ideal generated by an element (defined as the smallest ideal of $S$ containing $x$) might not always exist. However, the ideal $I_x$ generated by a \emph{positive} element $x\in S_+$ always does and we have that $I_x=\{y\in S \mid \text{ there is } y'\in S \text{ with } 0\leq y+y'\leq\infty x\}$. 
 
Finally, for any $\CatCa$-algebra of stable rank one $A$, we have that $\Cu_1(I)$ is an ideal of $\Cu_1(A)$ for any $I\in\Lat(A)$. (And $\Cu(I)$ is an ideal of $\Cu(A)$.) In fact, we have two lattice isomorphisms as follows:
\[
	\begin{array}{ll}
		\Lat(A)\overset{\simeq}{\longrightarrow} \Lat(\Cu_1(A))\overset{\simeq}{\longrightarrow} \Lat(\Cu(A))\\
		\hspace{0,9cm} I\longmapsto\hspace{0,4cm} \Cu_1(I)\hspace{0,5cm}\longmapsto \Cu_1(I)_+
	\end{array}
\]
We refer the reader to \cite{APT14} and \cite{C21b} for more details on $\Cu$/$\Cu^\sim$-ideals.
\end{prg}

\subsection{Uniformly Based Cuntz semigroups and \texorpdfstring{$\Cu$}{Cu}-metrics}\label{sec:uniformbasis} The author has introduced in \cite{C22} a notion of \emph{uniform basis} for an abstract Cuntz semigroup $S$. In a nutshell, a uniform basis allows us to approach any element $s$ of $S$ by an increasing sequence $(\epsilon_n(s))_n$ in $s_\ll:=\{x\in S\mid x\ll s\}$ in a \emph{uniform way}. In the sense that, for any $n\in\N$, there exists a positively ordered monoid $M_n$ such that $\epsilon_n(S)\subseteq M_n$. (See \cite[Definition 3.2]{C22}.) 

As for examples, the countable set $(\N,\id_\N)_n$ is a uniform basis for the $\Cu$-semigroup $\overline{\N}$. Moreover, it has been shown that any $\Cu$-semigroup of the form $\Lsc(X,\overline{\N}^r)$, where $X$ is a one-dimensional compact $\CW$-complex and $r\in\N$, has a uniform basis. More specifically, for any supernatural number $w:=\overset{\infty}{\underset{i=0}{\Pi}}\, p_i$, there exists an associated uniform basis $\mathcal{B}_{w}$ which relies upon increasingly finer closed (finite) covers $\mathcal{U}_n:=\{\overline{U_{k}}\}_{1}^{w_n}$, where $w_n:=\overset{n}{\underset{i=0}{\Pi}}\, p_i$, induced by increasingly finer equidistant partitions $(x_{k})_{0}^{w_n}$ of $X$ of size $1/w_n$. We refer the reader to \cite[\S4.A and \S4.B]{C22} for an explicit construction. We may use these finite closed covers of $X$ induced by equidistant partitions of any finite size $1/w$, and we will denote such a cover by $\{\overline{U_k}\}_1^w$.

Further, a uniform basis for a $\Cu$-semigroup $S$ yields a (semi)-metric on the set $\Hom_{\Cu}(S,T)$, where $T$ is any $\Cu$-semigroup. Let $S$ be a uniformly based $\Cu$-semigroup and let $\mathcal{B}:=(M_n,\epsilon_n)_n$ be a uniform basis of $S$. Let $\alpha,\beta:S\longrightarrow T$ be two $\Cu$-morphisms, where $T$ is any $\Cu$-semigroup. We define
\[
dd_{\Cu,\mathcal{B}}(\alpha,\beta):= \inf\limits_{n\in\N} \{ \frac{1}{n} \mid \alpha\underset{M_n}{\simeq}\beta \}
\]  
where $\alpha\underset{M_n}{\simeq}\beta$ if, for any $g'\ll g$ in $M_n$ we have that $\alpha(g')\leq \beta(g)$ and $\beta(g')\leq \alpha(g)$. (See \cite[Definition 3.8]{C22}). If the infimum does not exist, we set the value to $\infty$. This defines a semi-metric on $\Hom_{\Cu}(S,T)$. We mention that whenever $S$ is of the above form (that is, $S=\Lsc(X,\overline{\N}^r)$), any two bases $\mathcal{B}_{w},\mathcal{B}_{w'}$ obtained from any two supernatural numbers $w,w'$ induce topologically equivalent semi-metrics.

Finally, the author obtained an approximate intertwining theorem for uniformly based $\Cu$-semi\-groups that we recall next, when we prove that the Cuntz semigroups of the $\CatCa$-algebras that we construct (as inductive limits) are isomorphic. 

\subsection{Evans-Kishimoto folding interval algebras}\label{sec:EK} These $\CatCa$-algebras, constructed as pullbacks of interval algebras and finite dimensional $\CatCa$-algebras, can be seen as a generalization of the well-known Elliott-Thomsen dimension drop interval algebras. That being said, they have in fact been considered in \cite{EK91} a few years before the dimension drop algebras. 
We recall their construction and expose some of their properties. (We refer the reader to \cite[Section 2]{EK91} and more precisely to \cite[(2.14)]{EK91} for the original construction.)

Let $q$ be a natural number and denote the full matrix algebra of size $q$ by $M_q$. Let $e$ be a projection of $M_q$ and write $p:=\rank(e)$. For any $l\in\N_*$, we consider the following pullback:
\[
\xymatrix{
\mathcal{I}^l_{q,e}\ar[r]^{\pi_1}\ar[d]_{\pi_2}& C([0,1],\underset{1}{\overset{l}\otimes} M_q)\ar[d]^{(ev_0, ev_1)}\\
(\underset{1}{\overset{l-1}\otimes} M_q)\oplus (\underset{1}{\overset{l-1}\otimes} M_q)\ar[r]_{(i_0^l, i_1^l)}& (\underset{1}{\overset{l}\otimes} M_q)\oplus (\underset{1}{\overset{l}\otimes} M_q)
}
\]
where $i_0^l,i_1^l: (\underset{1}{\overset{l-1}\otimes} M_q)\oplus (\underset{1}{\overset{l-1}\otimes} M_q)\longrightarrow \underset{1}{\overset{l}\otimes} M_q$ are injective $^*$-homomorphisms constructed by induction. We refer the reader to  \cite[(2.11)/(2.12)]{EK91} for more details and we mention that the original construction involves two projections $E_1$ and $E_2$, that are given in our case by $E_1:=e$ and $E_2:= 1-e$. 
Let us precise that for $l=1$, we define $\underset{1}{\overset{0}\otimes} M_q:=\mathbb{C}$ and we exactly recover the Elliott-Thomsen dimension-drop interval algebra of size $q$. 

Let $l\in\N_*$. Using the 6-term exact sequence, we get that
\[
\K_0(\mathcal{I}^l_{q,e})\simeq \Z\hspace{3cm} \K_1(\mathcal{I}^l_{q,e})\simeq \Z/q\Z.\]
Now, arguing similarly as in \cite[Proof of 2.2]{EK91}, we consider the paths $\xi_0:t\longmapsto t/2$ and $\xi_1:t\longmapsto 1-t/2$ in $[0,1]$. They yield the following $^*$-homomorphism:
\[
\begin{array}{ll}
\psi_{l,e}: \mathcal{I}_{q,e}^l\longrightarrow \mathcal{I}_{q,e}^{l+1}\\
\hspace{1,2cm}f\longmapsto f(\xi_0)\otimes e+ f(\xi_1)\otimes (1-e)
\end{array}
\]
At level of $\K$-theory, we obtain that 
\[
	\begin{array}{ll}
		\K_0(\psi_{l,e}):\Z\overset{\times q}\longrightarrow\Z \hspace{3cm}
		\K_1(\psi_{l,e}):\Z/q\Z\overset{\times \rank(e)}\longrightarrow\Z/q\Z.	\end{array}
\]
Finally, the Cuntz semigroup of $\mathcal{I}_{q,e}^l$ has been studied in \cite[\S 4.20]{C22}. First, it can be computed as follows: \begin{align*}	 
\Cu(\mathcal{I}^l_{q,e})&\simeq \{f\in \Lsc([0,1],\overline{\N}) \mid f(0),f(1)\in q\overline{\N}\}\\
	&\simeq \{f\in \Lsc([0,1],\frac{1}{q^l}\overline{\N}) \mid f(0), f(1)\in \frac{q}{q^l}\overline{\N}\}.
\end{align*}
Moreover, it is shown that $\Cu(\mathcal{I}^l_{q,e})$ is uniformly based. More precisely, for any supernatural number $w:=\overset{\infty}{\underset{i=0}{\Pi}}\, p_i$, the associated uniform basis $\mathcal{B}_w=(M_n,\epsilon_n)_n$ of $\Lsc([0,1],\overline{\N})$ of size $w$ induces a uniform basis $\mathcal{B}'_w:=(M'_n,\epsilon_n')_n$ of $\Cu(\mathcal{I}^l_{q,e})$ defined by
\[
\begin{array}{ll} 
\left\{
\begin{array}{ll} 
\vspace{0,1cm}M'_n:=\{f\in\Lsc(X,\frac{1}{q^l}\N)\mid f_{|U_{k}} \text{ is constant for any }k\in \{1,\dots,w_n\} \text{ and } f(0), f(1)\in \frac{q}{q^l}\overline{\N}\}.\\ 
\epsilon'_n: \{f\in\Lsc(X,\frac{1}{q^l}\N)\mid f(0), f(1)\in \frac{q}{q^l}\overline{\N}\}\longrightarrow M'_n
 \end{array}
 \right.\\
  \hspace{6,5cm}f\longmapsto\max\limits_{g\in M'_n}\{ g\ll f\}
   \end{array}
\]
where $w_n:=\overset{n}{\underset{i=0}{\Pi}}\, p_i$ and $\{\overline{U_{k}}\}_{k=1}^{w_n}$ is the finite closed cover induced by the (unique) equidistant partition of the interval of size $1/w_n$. 
Naturally, we refer to $\mathcal{B}'_w$ as \emph{the uniform basis of $\Cu(\mathcal{I}^l_{q,e})$ of size $w$}. We refer the reader to \cite[\S 4.20]{C22} for more details.

In the sequel, we shall use the uniform basis of $\Cu(\mathcal{I}^l_{q,e})$ of size $2^\infty$ and its associated semi-metric $dd_{\Cu,\mathcal{B}'_{2^\infty}}$ defined on $\Hom_{\Cu}(\Cu(\mathcal{I}^l_{q,e}),T)$ as follows: 
\[
dd_{\Cu,\mathcal{B}'_{2^\infty}}(\alpha,\beta):= \inf\limits_{n\in\N} \{ \frac{1}{2^n} \mid \alpha\underset{\Lambda_n}{\simeq}\beta \}
\]
where $\Lambda_n:=M'_n\cap\{f\in \Lsc([0,1],\{\frac{j}{q^l}\}_{j=0}^{q}) \mid f(0), f(1)\in \{0,\frac{q}{q^l}\}\}$. (See \cite[\S 5.6]{C22}.)

\section{Proof of the main theorem}

In the aim of proving the result of the paper, we are building two $\CatCa$-algebras $A$ and $B$ as inductive limits of one-dimensional $\NCCW$-complexes. More precisely, we shall construct two inductive sequences of finite direct sums whose building blocks are  matrices of some suitable order over Evans-Kishimoto folding interval algebras. We shall use evaluation maps and $^*$-homomorphisms described earlier (see \autoref{sec:EK}) between the building blocks. Subsequently, we check that these two $\CatCa$-algebras are non-simple, separable, unital $\AH$-algebras of stable rank one and we study the set of (simple) ideals of both $A$ and $B$. This will allow us to get an explicit description of their $\K$-Theory and prove that they agree at level of $\K_*:=\K_0\oplus\K_1$. The next step is to compute the distance between the $\Cu$-morphisms induced by the $^*$-homomorphisms of the inductive systems and conclude that there exists a two-sided approximate intertwining at level of $\Cu$ to prove that $A$ and $B$ agree at level of their Cuntz semigroups. Finally, using once again the set of simple ideals of both algebras, we will be enabled to prove that $\Cu_1(A)\nsimeq\Cu_1(B)$ and hence that $A\nsimeq B$.

The \textquoteleft triangular shape\textquoteright\ of the inductive sequences was greatly inspired by construction done in \cite{GJL19}. Nevertheless the building blocks, properties and arguments involved are quite distinct, so are the proofs.
\subsection{Construction of the blocks}\label{sec:block} Let us define the following sequences

$\bullet$ $(d_k)_k$ is a countable dense subset of $[0,1]$.

$\bullet$ $(p_k)_k$ denotes all the prime numbers in increasing order. (By convention, $p_0=2$ and $p_{-1}=1$.)

$\bullet$ $(q_k)_k$ is defined by $q_k:=p_kp_{k-1}$. (Thus, $q_0=2$.)

$\bullet$ $(r_k)_k$ is a strictly increasing sequence of natural numbers such that $r_0\geq 2$.

For any $k\in\N$, let $e_A^k,e_B^k$ be projections of $M_{q_k}$ of ranks $p_{k-1},p_k$ respectively. Now, for any $l\in\N$, we define the following folding interval algebras:
\[
\left\{
    \begin{array}{ll}
    		\mathcal{I}_{q_k}^l:=\mathcal{I}_{q_k,e^k_A}^l\\
    		\mathcal{J}_{q_k}^l:=\mathcal{I}_{q_k,e^k_B}^l
    \end{array}
\right.
\]
and we omit $e_A^k,e_B^k$ for notational purposes. We now build the $\CatCa$-algebras of the inductive system for $A$ using matrix algebras over $\mathcal{I}_{q_k}^n$. (The $\CatCa$-algebras of the inductive system for $B$ are constructing similarly, replacing $\mathcal{I}_{q_k}^n$ by $\mathcal{J}_{q_k}^n$.)
\begin{align*}
&A_0 = \mathcal{C}([0,1])\\
&A_1 = M_{q_0^{r_0}}(\mathcal{I}_{q_0}^1) \oplus \mathcal{C}([0,1])\\
&A_2 = M_{q_0^{r_0}q_0^{r_1}}(\mathcal{I}_{q_0}^2) \oplus M_{q_1^{r_1}}(\mathcal{I}_{q_1}^1) \oplus \mathcal{C}([0,1])\\
&\hspace{0,2cm}\vdots\\
&A_n = M_{q_0^{r_0}... q_0^{r_{n-1}}}(\mathcal{I}_{q_0}^n) \oplus  ...\oplus M_{q_i^{r_i}... q_i^{r_{n-1}}}(\mathcal{I}_{q_i}^{n-i}) \oplus... \oplus M_{q_{n-1}^{r_{n-1}}}(\mathcal{I}_{q_{n-1}}^{1})\oplus \mathcal{C}([0,1])\\
&\hspace{0,2cm}\vdots
\end{align*}
Let us write $[n,i]:=\prod\limits_{j=i}^{n-1}q_i^{r_j}$, for any $0\leq i\leq n-1$. We finally rewrite
\[
\hspace{-2,1cm}\left\{
    \begin{array}{ll}
    		A_n:=\bigoplus\limits_{i=0}^{n-1}M_{[n,i]}(\mathcal{I}_{q_i}^{n-i})\oplus \mathcal{C}([0,1]).\\
    		B_n:=\bigoplus\limits_{i=0}^{n-1}M_{[n,i]}(\mathcal{J}_{q_i}^{n-i})\oplus \mathcal{C}([0,1]).
    \end{array}
\right.
\]
Notice that $[n+1,i]=q_i^{r_n}[n,i]$, for any $0\leq i\leq n-1$. 
\subsection{Construction of the morphisms}\label{sec:morph} Now that have the $\CatCa$-algebras of the two inductive systems, we construct the morphisms. Observe that in both systems, the $n$-th $\CatCa$-algebra is a direct sum of $n$ matrix algebras over folding interval algebras of size $\{q_i\}_0^{n-1}$ together with $\mathcal{C}([0,1])$. In other words, the $n$-th $\CatCa$-algebra is a direct sum of $n+1$ blocks. The $n$-th $^*$-homomorphism will be constructed using $n+2$ partial $^*$-homomorphisms. 

More precisely, there will be $n$ partial $^*$-homomorphisms from the $n$ matrix algebras (of order $[n,i]$) over folding interval algebras of size $\{q_i\}_0^{n-1}$ into the $n$ matrix algebras (of order $[n+1,i]$) over the same folding interval algebras (that is, of size $\{q_i\}_0^{n-1}$). Futher, there will be $2$ partial $^*$-homomorphisms from $\mathcal{C}([0,1])$: the first one has codomain the additional block the $(n+1)$-th $\CatCa$-algebra (which is, a matrix algebra over folding interval algebra of size $q_n$) while the second one has codomain $\mathcal{C}([0,1])$ and is an evaluation map at $d_n$. 

Let $n\in\N$. We denote $\phi_{nn+1}: A_n\longrightarrow A_{n+1}$ and $\psi_{nn+1}: B_n\longrightarrow B_{n+1}$ and we construct them as follows:\\
$\bullet$ \emph{The $n$ first partial $^*$-homomorphisms for $\phi_{nn+1}$}. Let $0\leq i\leq n-1$, we define
\[
\begin{array}{ll}
	\hspace{0,7cm}\phi_{nn+1}^i: M_{[n,i]}(\mathcal{I}_{q_i}^{n-i})\longrightarrow M_{[n+1,i]}(\mathcal{I}_{q_i}^{n-i+1})\\\\
\hspace{3,58cm} f\longmapsto 
\left(\begin{smallmatrix}
	f(\xi_0)\otimes e_A^i+ f(\xi_1)\otimes (1-e_A^i) &&&&\\
   &f(1/q_i^{r_n})\otimes1_{M_{q_i}}&&&\\
   &&...&&\\ 										&&&& f((q_i^{r_n}-1)/q_i^{r_n})\otimes1_{M_{q_i}}
\end{smallmatrix}\right)
\end{array}
\]
$\bullet$ \emph{The $n$ first partial $^*$-homomorphisms for $\psi_{nn+1}$}. Let $0\leq i\leq n-1$, we define
\[
\begin{array}{ll}
	\hspace{0,7cm}\psi_{nn+1}^i: M_{[n,i]}(\mathcal{J}_{q_i}^{n-i})\longrightarrow M_{[n+1,i]}(\mathcal{J}_{q_i}^{n-i+1})\\\\
\hspace{3,68cm} f\longmapsto 
\left(\begin{smallmatrix}
	 f(\xi_0)\otimes e_B^i+ f(\xi_1)\otimes (1-e_B^i) &&&&\\
   & f(1/q_i^{r_n})\otimes1_{M_{q_i}}&&&\\
   &&...&&\\ 										&&&& f((q_i^{r_n}-1)/q_i^{r_n})\otimes1_{M_{q_i}}
\end{smallmatrix}\right)
\end{array}
\]
$\bullet$ \emph{The $(n+1)$-th partial $^*$-homomorphism for $\phi_{nn+1}$}. We define
\[
\begin{array}{ll}
\phi_{nn+1}^n: \mathcal{C}([0,1])\longrightarrow M_{q_n^{r_n}}(\mathcal{I}_{q_n}^{1}) \\
\hspace{2,22cm} f\longmapsto f(0)\otimes 1_{M_{q_n^{r_n+1}}}
\end{array}
\]
$\bullet$ \emph{The $(n+1)$-th partial $^*$-homomorphism for $\psi_{nn+1}$}. We define
\[
\begin{array}{ll}
\psi_{nn+1}^n: \mathcal{C}([0,1])\longrightarrow M_{q_n^{r_n}}(\mathcal{J}_{q_n}^{1}) \\
\hspace{2,22cm} f\longmapsto f(0)\otimes 1_{M_{q_n^{r_n+1}}}
\end{array}
\]
$\bullet$ \emph{The $(n+2)$-th partial $^*$-homomorphism for $\phi_{nn+1}$ and $\psi_{nn+1}$}. We define
\[
\begin{array}{ll}
\hspace{-0,9cm}\phi_{nn+1}^{n+1},\psi_{nn+1}^{n+1}: \mathcal{C}([0,1])\longrightarrow \mathcal{C}([0,1])\\
\hspace{2,42cm}f\longmapsto f(d_n)
\end{array}
\]
We can now construct
\[
\left\{
    \begin{array}{ll}
    		\phi_{nn+1}:=(\phi_{nn+1}^0,..., \phi_{nn+1}^{n-1},(\phi_{nn+1}^{n}, \phi_{nn+1}^{n+1})).\\
    		\psi_{nn+1}:=(\psi_{nn+1}^0,..., \psi_{nn+1}^{n-1},(\psi_{nn+1}^{n}, \psi_{nn+1}^{n+1})).
    \end{array}
\right.
\]
Finally, we define
\[
\left\{
    \begin{array}{ll}
    		A:=\lim\limits_{\underset{n}{\longrightarrow}}(A_n,\phi_{nn+1}).\\
    		B:=\lim\limits_{\underset{n}{\longrightarrow}}(B_n,\psi_{nn+1}).
    \end{array}
\right.
\]

\begin{prop}
Both $A$ and $B$ are separable unital $\CatCa$-algebras of with stable rank one.
\end{prop}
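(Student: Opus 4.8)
The plan is to treat the three assertions separately, since they are of quite different natures: separability and unitality amount to bookkeeping about the building blocks and the connecting maps, while stable rank one is the only substantial point.

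\textbf{Separability and unitality.} Each building block $\mathcal{I}^{l}_{q_i}$ (resp. $\mathcal{J}^l_{q_i}$) is a separable unital $\CatCa$-algebra, being a pullback of the separable unital algebras $C([0,1],\otimes^l M_{q_i})$ and $(\otimes^{l-1}M_{q_i})^{\oplus 2}$; matrix amplifications and finite direct sums preserve both properties, and $\mathcal{C}([0,1])$ is separable and unital, so each $A_n$ (resp. $B_n$) is separable and unital. First I would record that a countable inductive limit of separable $\CatCa$-algebras is separable, giving separability of $A$ and $B$. For unitality it suffices to check that each connecting map $\phi_{nn+1}$ (resp. $\psi_{nn+1}$) sends $1_{A_n}$ to $1_{A_{n+1}}$, which I would verify summand by summand: evaluating $\phi^i_{nn+1}$ at $f=1$ turns the first block $f(\xi_0)\otimes e_A^i+f(\xi_1)\otimes(1-e_A^i)$ into $1\otimes e_A^i+1\otimes(1-e_A^i)=1$ and each block $f(j/q_i^{r_n})\otimes 1_{M_{q_i}}$ into a unit, so that $\phi^i_{nn+1}(1)=1_{M_{[n+1,i]}(\mathcal{I}_{q_i}^{n-i+1})}$, while $\phi^n_{nn+1},\phi^{n+1}_{nn+1}$ send $1$ to the units of the two remaining summands. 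Since every summand of $A_{n+1}$ receives the unit, $\phi_{nn+1}$ is unital, and a unital inductive limit along unital maps is unital.

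\textbf{Stable rank one: the reduction.} The strategy is to establish stable rank one for the building blocks and transport it to the limit via the fact that $\sr$ is non-increasing under quotients and under inductive limits (Rieffel): if every $A_n$ has stable rank one, then so does $A=\varinjlim(A_n,\phi_{nn+1})$. This formulation matters here because the connecting maps are \emph{not} injective, owing to the point-evaluation partial maps $\phi^{n+1}_{nn+1}\colon f\mapsto f(d_n)$; so I would first replace each $A_n$ by its image in $A$, a quotient of $A_n$ still of stable rank one, and apply the inductive-limit estimate to the resulting increasing union. Since $\sr$ of a finite direct sum is the maximum over the summands, $\sr(M_k(C))=1$ whenever $\sr(C)=1$, and $\sr(\mathcal{C}([0,1]))=1$, the whole reduction rests on the single claim that the folding interval algebras $\mathcal{I}^l_{q,e}$ have stable rank one.

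\textbf{Stable rank one of the blocks.} This is the main obstacle. Writing $\mathcal{I}^l_{q,e}$ concretely as $\{f\in C([0,1],M_{q^l}) : f(0)=i_0^l(x,y),\ f(1)=i_1^l(x,y)\text{ for some }(x,y)\}$, the first point is that $f$ is invertible in $\mathcal{I}^l_{q,e}$ if and only if $f(t)$ is invertible in $M_{q^l}$ for every $t$: the boundary maps $i_0^l,i_1^l$ are injective unital $^*$-homomorphisms, hence spectrum-preserving, so the endpoint data $(x,y)$ is automatically invertible and the pointwise inverse again satisfies the (linked) boundary conditions. To prove density of invertibles I would use that the singular matrices $\{\det=0\}$ form a complex-algebraic hypersurface of \emph{real} codimension $2$ in $M_{q^l}(\mathbb{C})$: given $f$ and $\epsilon>0$, I would perturb the endpoint values $f(0),f(1)$ to invertibles within the finite-dimensional (hence stable rank one) boundary algebras, then perturb $f$ on $(0,1)$ into general position off this codimension-$2$ set, using path-connectedness of $\mathrm{GL}_{q^l}(\mathbb{C})$ to reconnect to the corrected endpoints; this yields an invertible element of $\mathcal{I}^l_{q,e}$ within $\epsilon$ of $f$. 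Equivalently, one may invoke that these are one-dimensional $\NCCW$-complexes (generalized dimension-drop algebras), for which stable rank one is known. The identical argument applies to $\mathcal{J}^l_{q,e}$, which completes the proof.
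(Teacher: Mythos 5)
Your proposal is correct, and its overall skeleton (blocks have stable rank one, direct sums and matrix amplifications preserve it, Rieffel's estimate passes it to the limit) matches the paper; the difference is in what is proved versus cited. The paper's proof is two sentences: it cites that stable rank one is preserved by inductive limits and that every one-dimensional $\NCCW$ complex has stable rank one, which covers the folding interval algebras $\mathcal{I}^l_{q,e}$ at once. You instead supply a self-contained proof of the building-block fact: the pointwise characterization of invertibility in $\mathcal{I}^l_{q,e}$ (correct --- injective unital $^*$-homomorphisms between $\CatCa$-algebras are spectrum-preserving, and both endpoint conditions are governed by the same pair $(x,y)$, as your parenthetical ``linked'' acknowledges), followed by the classical codimension-two perturbation argument: correct the endpoint data inside the finite-dimensional boundary algebra, then perturb rel endpoints off the singular hypersurface $\{\det=0\}$, which a one-dimensional path generically avoids. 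This buys a reader-verifiable argument at the cost of length, and it is in effect a proof of the cited $\NCCW$ fact in this special case. One small remark: your detour of replacing each $A_n$ by its image in $A$ before applying the inductive-limit estimate is valid (images are quotients, and $\sr$ does not increase under quotients) but unnecessary --- the standard argument needs no injectivity of the connecting maps, since given $a\in A$ one approximates it by $\phi_{n\infty}(a_n)$, replaces $a_n$ by a nearby invertible in $A_n$, and notes that the unital map $\phi_{n\infty}$ carries invertibles to invertibles. Your unitality check of the connecting maps, done summand by summand, is a detail the paper asserts without verification, and it holds as you compute.
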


\begin{proof}
Since all $\CatCa$-algebras of the inductive systems are separable and unital, together with the fact that all morphisms are also unital, we easily obtain that $A$ and $B$ are unital separable $\CatCa$-algebras. In addition, the stable rank one property is preserved by inductive limits and any one-dimensional $\NCCW$ complex has stable rank one. 
\end{proof}

%%%%% MAYBE SPEAK ABOUT PRIME\sim PRIMITIVE for separable C*algebra (REF??) and unital (D64, 3.1.7) UNITAL ALGEBRA THEN MAX TWO SIDED \SIM PRIMITIVE Let us first remind the lector a lemma from \cite[]{D64} about primitive and prime ideals of a $\CatCa$-algebra: Let $A$ be a (unital) separable $\CatCa$-algebra. Then any prime ideal of $A$ is primitive. The converse being always true.\\ 
%%% WE also quit the MAXIMAL IDEAL TO PUT BACK

\subsection{Computation of invariants}\label{sec:invariant}
Now that we have built our inductive systems, let us show that they do not agree on their unitary Cuntz semigroup even though they have isomorphic $\K$-theory and isomorphic Cuntz semigroups. For that matter, we study the lattice of (closed two-sided) ideals of both algebras $A$ and $B$. We take a particular interest in the subset of simple ideals of $A$ and $B$. The benefit is twofold: the structure of simple ideals play a key role in proving that $\Cu_1(A)\nsimeq \Cu_1(B)$. Moreover, the direct sum of all simple ideals is a primitive ideal whose quotient is $\mathbb{C}$ and therefore, we can obtain the $\K$-theory of $A$ and $B$ using the 6-term exact sequence of $\K$-theory.
\begin{thm} 
\label{lma:simpleideals}
Let $i\in\N$. We consider
\[
\left\{
    \begin{array}{ll}
    		\mathfrak{s}_{i}:=\lim\limits_{\underset{n> i}\longrightarrow}(I_{n,i},{\phi_{nm}}_{|I_{n,i}}) \hspace{0,3cm}\text{ and } \hspace{0,3cm}\mathfrak{a}_{i}:=\lim\limits_{\underset{m> n}\longrightarrow}(I^c_{n,i},{\phi_{nm}}_{|I^c_{n,i}}).\\
    		\mathfrak{t}_{i}:=\lim\limits_{\underset{n> i}\longrightarrow}(J_{n,i},{\psi_{nm}}_{|J_{n,i}}) \hspace{0,3cm}\text{ and } \hspace{0,3cm}\!\mathfrak{b}_{i}:=\lim\limits_{\underset{n> i}\longrightarrow}(J^c_{n,i},{\psi_{nm}}_{|J^c_{n,i}}).
    \end{array}
\right.
\]
where $I_{n,i}:=M_{[n,i]}(\mathcal{I}_{q_i}^{n-i})$ is the $(i+1)$-th full block of $A_n$ and $I^c_{n,i}:=\overset{n-1}{\underset{j=0,j\neq i}\bigoplus}M_{[n,j]}(\mathcal{I}_{q_j}^{n-j})\,\oplus\, M_{[n,n]}(\mathcal{C}([0,1]))$ its complementary in $A_n$. Respectively, $J_{n,i}$ is the $(i+1)$-th full bock of $B_n$ and $J^c_{n,i}$ is its complementary in $B_n$. Then:

%MAXIMAL IDEAL (ii) Any maximal ideal of $A$ is either ${\underset{i\in\N}\oplus \mathfrak{i}_{n}}$ or of the form 
%\[
%\mathfrak{a}_{n}:=\lim\limits_{\underset{m> n}\longrightarrow}(I^c_{m,n},{\phi_{m'm}}_{|I^c_{m,n}})
%\]
%for some $n\in\N$. Respectively ${\underset{i\in\N}\oplus \mathfrak{j}_{n}}$ or $\mathfrak{b}_{n}$ for $B$.\\
%%%PRIME IDAL Furthermore, any prime ideal of $A$ (or $B$) is maximal. \\
(i) The sets of simple ideals of $A$ and $B$ are respectively $\{\mathfrak{s}_{i}\}_{i\in\N}$ and $\{\mathfrak{t}_{i}\}_{i\in\N}$. 

(ii) We have that
\[
    \left\{
    \begin{array}{ll}
    		A=\mathfrak{a}_i\oplus \mathfrak{s}_{i}.\\
    		B=\mathfrak{b}_i\oplus \mathfrak{t}_{i}.  
    		\end{array}
\right.
\vspace{0,2cm}\]

(iii) The quotients $A/\!\underset{i\in\N}\oplus\mathfrak{s}_{i}\simeq B/\!\underset{i\in\N}\oplus\mathfrak{t}_{i}\simeq \mathbb{C}$.
\end{thm}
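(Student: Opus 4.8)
The plan is to analyze the ideal structure of the inductive limits by tracking how the building blocks map into one another under the connecting $^*$-homomorphisms. The key structural observation is that each $\phi_{nn+1}^i$ (for $0\leq i\leq n-1$) maps the $(i+1)$-th block $M_{[n,i]}(\mathcal{I}_{q_i}^{n-i})$ \emph{into} the corresponding block $M_{[n+1,i]}(\mathcal{I}_{q_i}^{n-i+1})$ of $A_{n+1}$, while the two partial maps out of $\mathcal{C}([0,1])$ feed both the newly created block $M_{q_n^{r_n}}(\mathcal{I}_{q_n}^1)$ and back into $\mathcal{C}([0,1])$ via evaluation at $d_n$. Thus for fixed $i$, the sequence of $(i+1)$-th blocks $\{I_{n,i}\}_{n>i}$ is closed under the connecting maps, which is exactly why $\mathfrak{s}_i:=\varinjlim(I_{n,i},{\phi_{nm}}_{|I_{n,i}})$ is well-defined as a subalgebra, and dually the complementary blocks $\{I^c_{n,i}\}_{n>i}$ form a complementary inductive subsystem giving $\mathfrak{a}_i$.

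For part (ii), I would verify at each finite stage $n>i$ that $A_n=I_{n,i}\oplus I^c_{n,i}$ is an internal direct sum decomposition into an ideal and its complement, and that the connecting maps respect this splitting, i.e.\ $\phi_{nn+1}(I_{n,i})\subseteq I_{n+1,i}$ and $\phi_{nn+1}(I^c_{n,i})\subseteq I^c_{n+1,i}$. Since direct sums commute with inductive limits and both summands are closed ideals preserved by the system, passing to the limit yields $A=\mathfrak{a}_i\oplus\mathfrak{s}_i$ (and symmetrically for $B$). This step is essentially bookkeeping once the block-preservation property is established.

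For part (i), to show $\mathfrak{s}_i$ is simple, I would argue that $\mathfrak{s}_i$ is an inductive limit of the full matrix-over-folding-interval blocks in which the connecting maps ${\phi_{nm}}_{|I_{n,i}}$ are built from the standard map $\psi_{l,e}$ (composed with the multiplicity/point-evaluation embeddings). The crucial point is that these internal connecting maps have \emph{full image} in a suitable sense: the combination of evaluation at the endpoints $\xi_0,\xi_1$ together with evaluations at the interior points $j/q_i^{r_n}$ forces any nonzero ideal to eventually be everything, because the point evaluations become dense as $n\to\infty$ and the dimension-drop boundary conditions are exhausted. Establishing that this inductive limit of folding interval algebras is simple is the main obstacle: one must check that no proper closed two-sided ideal survives, which amounts to showing the connecting maps are eventually ``point-surjective'' over all of $[0,1]$ (using that $(d_k)_k$ is dense and the interior evaluation points proliferate). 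I would then separately argue that these $\mathfrak{s}_i$ exhaust the simple ideals, by observing that any simple ideal must be contained in (hence equal to) some $\mathfrak{s}_i$ via the lattice isomorphism $\Lat(A)\cong\Lat(\Cu(A))$ recalled in the preliminaries, together with the fact that the only source of new ideals at each stage is the creation of one new full block.

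Finally, for part (iii), the quotient $A/\bigoplus_{i\in\N}\mathfrak{s}_i$ is the inductive limit of the quotients $A_n/\bigoplus_{i<n}I_{n,i}$, which at each finite stage is precisely the trailing copy $\mathcal{C}([0,1])$ (the only block not of the form $I_{n,i}$). Under the connecting maps the induced map on these $\mathcal{C}([0,1])$ quotients is the evaluation $f\mapsto f(d_n)$, so the quotient is $\varinjlim(\mathcal{C}([0,1]),\ev_{d_n})$. Since $(d_n)_n$ is dense in $[0,1]$, this telescope of point evaluations collapses to $\mathbb{C}$ (each successive evaluation restricts to a single point and the limit is the one-point evaluation along a convergent subsequence), giving $A/\bigoplus_i\mathfrak{s}_i\simeq\mathbb{C}$, and identically for $B$. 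This last part is short once the quotient is identified with the tail $\mathcal{C}([0,1])$ system; the only care needed is checking that the inductive limit of point evaluations on $\mathcal{C}([0,1])$ is indeed $\mathbb{C}$.
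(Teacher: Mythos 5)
Your parts (ii) and (iii), and the mechanism you invoke for simplicity, essentially coincide with the paper's proof: block preservation under the connecting maps gives $A=\mathfrak{a}_i\oplus\mathfrak{s}_i$, and the quotient $A/\underset{i\in\N}\oplus\mathfrak{s}_i$ is governed by the tail system of point evaluations on $\mathcal{C}([0,1])$ (the paper phrases this via the cocone $e_n(f_0,\dots,f_n)=f_n(d_n)$ and shows $\ker e=\underset{i\in\N}\oplus\mathfrak{s}_i$; note that density of $(d_n)$ is irrelevant to this step, since each connecting map of the quotient system already factors through $\mathbb{C}$, so your ``convergent subsequence'' remark is a red herring). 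However, the exhaustion half of (i) --- that $\{\mathfrak{s}_i\}_{i\in\N}$ are the \emph{only} simple ideals --- has a genuine gap as you argue it. Your claim that ``the only source of new ideals at each stage is the creation of one new full block'' is false: each block $M_{[n,i]}(\mathcal{I}_{q_i}^{n-i})$ is itself far from simple (its ideals correspond to closed subsets of $[0,1]$ compatible with the boundary conditions), so the ideal lattice of $A_n$ is much larger than the lattice of block sums. Moreover, the appeal to the lattice isomorphism $\Lat(A)\simeq\Lat(\Cu(A))$ is unhelpful and borderline circular here: $\Cu(A)$ is only computed later in the paper, and even granted, an abstract lattice isomorphism does not identify which concrete ideals of $A$ are simple. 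The tool actually needed --- and used by the paper --- is Blackadar's result (\cite[Lemma 4.5]{Bl77}) that every closed two-sided ideal $I$ of the limit is the closure of $\underset{n\in\N}\cup\,\phi_{n\infty}(I_n)$, where $I_n:=\phi_{n\infty}^{-1}(\phi_{n\infty}(A_n)\cap I)$ is an ideal of $A_n$; combined with the density argument below, this shows that any nonzero compatible system of ideals inside the $i$-th block chain eventually fills the whole block, so a simple ideal is forced to be one of the $\mathfrak{s}_i$.

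Relatedly, the simplicity of $\mathfrak{s}_i$, which you correctly flag as the main obstacle, is left as a heuristic (``point-surjectivity'') where the paper supplies two concrete steps: (a) the reduction, again via \cite[Lemma 4.5]{Bl77}, to showing that for every nonzero $g\in I_{n,i}$ there exists $m$ with $\overline{I_{m,i}\,\phi_{nm}(g)\,I_{m,i}}=I_{m,i}$; and (b) the observation that supports propagate (if $\supp g$ contains an interval of length $s$, so does $\supp\phi_{nm}(g)$), so density of the evaluation points $\{k/q_i^{r_m}\}_{m>n}$ in $[0,1]$ forces some $k/q_i^{r_m}$ into $\supp\phi_{nm}(g)$, whence the diagonal summand $\phi_{nm}(g)(k/q_i^{r_m})\otimes 1_{M_{q_i}}$ is a nonzero \emph{constant} matrix-valued function generating a dense ideal of the full block. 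Note in particular that the density of $(d_k)_k$ plays no role in the simplicity of $\mathfrak{s}_i$; the relevant density is that of the interior evaluation points of the maps $\phi^i_{nn+1}$, and your proposal conflates the two.
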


\begin{proof}
We prove all statements for $A$, and the statements for $B$ are proven similarly.

(i) We first describe a sufficient condition for an ideal of inductive system to be simple. (This is inspired by \cite[Lemma 1]{G92}.) Let $A:=\lim\limits_{\longrightarrow}(A_n,\phi_{nm})_n$ be an inductive limit in $\CatCa$ and let $I$ be a (closed two-sided) ideal of $A$. Using \cite[Lemma 4.5]{Bl77}, we know that $I_0:=\underset{n\in\N}\cup(\phi_{n\infty}(A_n)\cap I)$ is dense in $I$. Hence it is enough to check that the algebraic limit $I_0$ is simple. For any $n\in\N$, write $I_n:= \phi_{n\infty}^{-1}(\{\phi_{n\infty}(A_n)\cap I\})$. Observe that $I_n$ is an ideal of $A_n$ (hence a $\CatCa$-algebra) and that $I_0=\underset{n\in\N}\cup\phi_{n\infty}(I_n)$. Thus, to show that $I$ is a simple ideal of $A$, it is enough to show that for any $n\in\N$, any $x\in I_n$, then there exists $m\geq n$ such that $\overline{I_m\phi_{nm}(x)I_m}=I_m$.

 Now, let us prove that $\mathfrak{s}_{i}$ is simple. It is sufficient to show that for any $g\in I_{n,i}$ such that $g\neq 0$, then there exists $m>n$ such that $\overline{I_{m,i} \phi_{nm}(g)I_{m,i}}= I_{m,i}$.
Let $n>i$ and let $g\in I_{n,i}$ such that $g\neq 0$. Observe that $\{\{1/q_i^{r_m},\dots, (q_i^{r_m}-1)/q_i^{r_m}\}\}_{m>n}$ is dense in $[0,1]$. Also note that if the support of $g$ contains an interval of length $s$, then so does the support of $\phi_{nm}(g)$. We deduce that there exists $m\geq n$ such that $k/q_i^{r_m}\in \supp(\phi_{nm}(g))$ for some $1\leq k \leq (q_i^{r_m}-1)$. Therefore, the ideal generated by $\phi_{nm}(g)$ is dense in $I_{m,i}$, from which we deduce that $\mathfrak{s}_{i}$ is a simple $\CatCa$-algebra for any $i\in\N$.

Finally, by \cite[Lemma 4.5]{Bl77}, we know that any ideal of $A$ is in fact of the form $\lim\limits_{\underset{n}{\longrightarrow}}(I,{\phi_{nm}}_{|I})$ for some $I\in\Lat(A_n)$. It follows from the above that $\{\mathfrak{s}_i\}_i$ are the only simple ideals of $A$.
%%%PRIME IDEAL ARE MAXIMAL Finally, suppose that $\mathfrak{p}$ is a prime ideal of $A$ that is not maximal. Since we now  know which are the maximal ideals, -any ideal coming from a sequence that has as stage $m$ a direct sum has the last full block and all the other full blocks but one-, we know  $\mathfrak{p}$ has the following form: $\mathfrak{p}= \lim\limits_{\underset{ m}\longrightarrow}(\bigoplus\limits_{j=\K_0}^{k_m} A_m^j,{\phi_{m'm}}_{|\mathfrak{p}_m}) $, where $\{\K_0,...,k_m\}\subseteq \{0,...,n\}$ is of cardinal at most $(n+1)-2$. Without loss of generality, suppose that $\{0,...,n\}\setminus\{\K_0,...,k_m\}=\{l_1,l_2\}$ Now consider $I:= \lim\limits_{\underset{m}\longrightarrow}(\bigoplus\limits_{j=\K_0}^{k_m} A_m^j\oplus A_m^{l_1},{\phi_{m'm}}_{|I_m}) $ and $J:= \lim\limits_{\underset{m}\longrightarrow}(\bigoplus\limits_{j=\K_0}^{k_m} A_m^j\oplus A_m^{l_2},{\phi_{m'm}}_{|J_m})$. One has $I.J\subseteq \mathfrak{p}$ and obviously neither $I$ nor $J$ is contained in $\mathfrak{p}$. The same exact proof applies to $B$.\\

(ii) Notice that $\mathfrak{s}_i\cap\mathfrak{a}_i=\{0\}$ and that $\mathfrak{s}_i +\mathfrak{a}_i=A$ for any $i\in\N$.

(iii) Consider $(\mathbb{C},e_n)_{n\in\N}$, where $e_n:A_n\longrightarrow \mathbb{C}$ given by $e_n(f_0,...,f_n)=f_n(d_n)$. It is clear that $(\mathbb{C},e_n)$ is a cocone to the inductive system. We deduce that there exists a unique $^*$-homomorphism $e:A\longrightarrow \mathbb{C}$ satisfying the universal properties of the direct limit. It is trivial that $e$ is surjective and also that $\underset{i\in\N}\oplus\mathfrak{s}_{i}\subseteq \ker e$. Finally, since $\ker e$ is a closed two-sided ideal of $A$ and also using observations made from \cite[Lemma 4.5]{Bl77}, we obtain that $\underset{i\in\N}\oplus\mathfrak{s}_{i}\supseteq \ker e$. (In fact, one could argue saying that $\underset{i\in\N}\oplus\mathfrak{s}_{i}$ is a maximal closed two-sided ideal of $A$.) We conclude that $A/\!\underset{i\in\N}\oplus\mathfrak{s}_{i}\simeq\mathbb{C}$.
\end{proof}

\begin{lma}
\label{lma:Z/pq} Let $p_1,p_2$ be distinct prime numbers. Consider the inductive system \break$(\Z/p_1p_2\Z, \varphi_{p_1})_{n\in\N}$ in the category of abelian group, where $\varphi_{p_1}:\Z/p_1p_2\Z\longrightarrow \Z/p_1p_2\Z$ is the (additive) group morphism corresponding to the multiplication by $p_1$. Let us denote its inductive limit by $S$. 
Then $S\simeq \Z/p_2\Z$.
\end{lma}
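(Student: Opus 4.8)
The plan is to exploit the coprimality of $p_1$ and $p_2$ via the Chinese Remainder Theorem in order to diagonalise the connecting map, thereby reducing the computation to two elementary stationary systems. Concretely, I would fix the isomorphism $\Psi:\Z/p_1p_2\Z\overset{\simeq}\longrightarrow \Z/p_1\Z\oplus\Z/p_2\Z$ given by $x\longmapsto(x\bmod p_1,\,x\bmod p_2)$, and then transport the entire inductive system through $\Psi$.

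The first step is to check that $\Psi$ is an isomorphism of inductive systems, that is, that it intertwines $\varphi_{p_1}$ with the map $0\oplus\mu$, where $\mu:\Z/p_2\Z\longrightarrow\Z/p_2\Z$ denotes multiplication by $p_1$. This is a one-line verification: $\Psi(p_1x)=(p_1x\bmod p_1,\,p_1x\bmod p_2)=(0,\,p_1\cdot(x\bmod p_2))$, since $p_1x\equiv 0\pmod{p_1}$. Because a single fixed isomorphism intertwining the connecting maps induces an isomorphism on inductive limits, this identifies $S$ with $\varinjlim(\Z/p_1\Z\oplus\Z/p_2\Z,\,0\oplus\mu)$.

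The second step is to compute this latter limit factor by factor, using that inductive limits commute with finite direct sums in $\AbGp$. For the first factor the connecting map is the zero map, so every element is identified with $0$ at the next stage and $\varinjlim(\Z/p_1\Z,0)=0$. For the second factor $\mu$ is an automorphism of $\Z/p_2\Z$, because $p_1$ is invertible modulo $p_2$ (as $\gcd(p_1,p_2)=1$); and the inductive limit of a stationary system whose connecting map is an automorphism is canonically isomorphic to the group itself, so $\varinjlim(\Z/p_2\Z,\mu)\simeq\Z/p_2\Z$. Assembling the two factors yields $S\simeq 0\oplus\Z/p_2\Z\simeq\Z/p_2\Z$.

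The computation is entirely routine; the only points requiring care are the bookkeeping that the CRT isomorphism genuinely commutes with the connecting maps and that limits distribute over the direct sum, and neither is a real obstacle. If one preferred to avoid invoking compatibility of limits with direct sums, the same conclusion follows directly: the images $\im(\varphi_{p_1}^{\,n})=p_1\Z/p_1p_2\Z$ stabilise already after a single step (since $\gcd(p_1^2,p_1p_2)=p_1$) to a copy of $\Z/p_2\Z$ on which $\varphi_{p_1}$ restricts to an automorphism, so the limit collapses onto this stable image.
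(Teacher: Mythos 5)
Your proof is correct and takes essentially the same approach as the paper: both decompose $\Z/p_1p_2\Z$ via the Chinese Remainder Theorem, observe that the connecting map becomes the zero map on the $\Z/p_1\Z$ factor and an automorphism (multiplication by the invertible $p_1$) on the $\Z/p_2\Z$ factor, and conclude that the limit is $\Z/p_2\Z$. Your write-up is in fact slightly more careful than the paper's, which swaps the identifications of $\ker\varphi_{p_1}$ and $\im\varphi_{p_1}$ in passing (a harmless typo), whereas you verify the intertwining explicitly and compute each factor's limit cleanly.
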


\begin{proof}
We know that $p_1$ and $p_2$ are coprime. Hence, by the well-known Chinese remainder theorem, we also know that $\Z/p_1p_2\Z\simeq \Z/p_1\Z\times\Z/p_2\Z$. Moreover, $\Z/p_2\Z \overset{\times p_1}\longrightarrow \Z/p_2\Z $ is an isomorphism -which corresponds to a permutation in $\Z/p_2\Z$-, and $\Z/p_1\Z \overset{\times p_1}\longrightarrow \Z/p_1\Z $ is the zero morphism. Therefore, we have that $\im\varphi_{p_1}\simeq \Z/p_1\Z$ and $\ker\varphi_{p_1}\simeq \Z/p_2\Z$. In the end, the inductive system considered is naturally isomorphic to $(\Z/p_1\Z\times\Z/p_2\Z\overset{0\times \varphi_{p_1}}\longrightarrow \Z/p_1\Z\times\Z/p_2\Z\overset{0\times \varphi_{p_1}}\longrightarrow...) $ from which the result follows.
\end{proof}

\begin{thm}\label{cor:K1iso}
The $\CatCa$-algebras $A$ and $B$ have isomorphic $\K$-Theory. That is,
\[
\begin{array}{ll}
(i)\hspace{0,4cm} \K_0(A)\simeq \K_0(B).\\
(ii)\hspace{0,32cm} \K_1(A)\simeq \K_1(B).
\end{array}
\]
\end{thm}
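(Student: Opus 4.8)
The plan is to read off the $\K$-theory of $A$ and $B$ from the split exact sequence provided by \autoref{lma:simpleideals}. Since $A/\!\bigoplus_i\mathfrak{s}_i\simeq\mathbb{C}$ and the unital inclusion $\mathbb{C}\longrightarrow A$, $\lambda\longmapsto\lambda 1_A$, splits the quotient map $e$, the sequence $0\to\bigoplus_i\mathfrak{s}_i\to A\to\mathbb{C}\to 0$ is split. Applying the six-term sequence (whose connecting maps then vanish) together with the continuity of $\K$-theory and its compatibility with countable direct sums, I would get $\K_0(A)\simeq\big(\bigoplus_i\K_0(\mathfrak{s}_i)\big)\oplus\Z$ and $\K_1(A)\simeq\bigoplus_i\K_1(\mathfrak{s}_i)$, and likewise for $B$ with the $\mathfrak{t}_i$. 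This reduces everything to computing $\K_*$ of the simple ideals, each of which is an inductive limit along a single block, $\mathfrak{s}_i=\lim_n(M_{[n,i]}(\mathcal{I}_{q_i}^{n-i}),\phi_{nn+1}^i)$.

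For the $\K_1$ part I would compute the connecting maps block by block. By Morita invariance $\K_1(M_{[n,i]}(\mathcal{I}_{q_i}^{n-i}))\simeq\Z/q_i\Z$. The map $\phi_{nn+1}^i$ is block diagonal: one folding summand $\psi_{n-i,e_A^i}$ and $q_i^{r_n}-1$ point evaluations tensored with $1_{M_{q_i}}$. The evaluation summands factor through the full matrix algebra $M_{q_i^{n-i}}$, whose $\K_1$ vanishes, so they contribute nothing, while the folding summand induces multiplication by $\rank(e_A^i)=p_{i-1}$. Hence $\K_1(\mathfrak{s}_i)=\lim(\Z/q_i\Z,\times p_{i-1})$, and for $i\geq 1$ (where $q_i=p_{i-1}p_i$ is a product of two distinct primes) \autoref{lma:Z/pq} gives $\K_1(\mathfrak{s}_i)\simeq\Z/p_i\Z$; the degenerate block $i=0$ (where $p_{-1}=1$) is checked directly and gives $\Z/p_0\Z$. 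The identical computation for $B$, now with $\rank(e_B^i)=p_i$, produces $\K_1(\mathfrak{t}_i)\simeq\Z/p_{i-1}\Z$ for every $i$.

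The crux is to match the two direct sums even though the block-wise groups differ. Here the convention $p_{-1}=1$ is decisive: since $\Z/p_{-1}\Z=0$, an index shift gives $\bigoplus_{i\geq 0}\K_1(\mathfrak{t}_i)=\bigoplus_{i\geq 0}\Z/p_{i-1}\Z=0\oplus\bigoplus_{i\geq 0}\Z/p_i\Z=\bigoplus_{i\geq 0}\K_1(\mathfrak{s}_i)$, so $\K_1(A)\simeq\K_1(B)$. I expect this relabeling to be the only genuinely delicate point; the rest is bookkeeping, the subtle inputs being that evaluations kill $\K_1$ and that folding induces multiplication by the rank of the projection.

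For the $\K_0$ part the comparison is immediate: the only difference between $\phi_{nn+1}^i$ and $\psi_{nn+1}^i$ is the projection ($e_A^i$ versus $e_B^i$) in the folding summand, and the induced map on $\K_0$ of a folding homomorphism is multiplication by $q_i=\rank(e)+\rank(1-e)$, which is independent of the chosen projection; the evaluation summands involve no projection either. Thus the inductive systems computing $\K_0(\mathfrak{s}_i)$ and $\K_0(\mathfrak{t}_i)$ coincide, whence $\K_0(\mathfrak{s}_i)\simeq\K_0(\mathfrak{t}_i)$ for every $i$ and therefore $\K_0(A)\simeq\K_0(B)$.
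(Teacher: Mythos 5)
Your proposal is correct and follows essentially the same route as the paper: the same extension $0\to\bigoplus_i\mathfrak{s}_i\to A\to\mathbb{C}\to 0$ analysed via the six-term sequence, the same block-wise computation giving $\K_1(\phi_{nn+1}^i)=\times\,p_{i-1}$ and $\K_1(\psi_{nn+1}^i)=\times\,p_i$ on $\Z/q_i\Z$, \autoref{lma:Z/pq} for the limits, and the index shift exploiting $p_{-1}=1$. Your only deviations are minor refinements: the unital section $\lambda\mapsto\lambda 1_A$ lets you split the extension outright, sidestepping the paper's argument that $\delta_1=0$ and its appeal to freeness of $\Z$; you check the degenerate case $i=0$ directly rather than folding it into the convention $p_{-1}=1$; and for $\K_0$ you note the two inductive systems literally coincide instead of computing $\K_0(\mathfrak{s}_i)\simeq\Z[\tfrac{1}{q_i}]$ explicitly.
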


\begin{proof}
Let $n\in\N$ and let $0\leq i\leq n-1$. It is not hard to check  that $K_0(\phi_{nn+1}^i):\Z\overset{\times q_i^{r_n+1}}\longrightarrow\Z$ and that $K_1(\phi_{nn+1}^i):\Z/q_i\Z\overset{\times p_{i-1}}\longrightarrow\Z/q_i\Z $. Similarly, $K_0(\psi_{nn+1}^i):\Z\overset{\times q_i^{r_n+1}}\longrightarrow \Z$ and $K_1(\psi_{nn+1}^i):\Z/q_i\Z\overset{\times p_{i}}\longrightarrow\Z/q_i\Z$. 
Doing a quick computation for $\K_0$ and using \autoref{lma:Z/pq} for $\K_1$, we obtain that
\[
\begin{array}{ll}\K_0({\mathfrak{s}_i})\simeq \K_0({\mathfrak{t}_i})\simeq \Z[\frac{1}{q_i}]\\\hspace{-1,7cm}\K_1({\mathfrak{s}_i})\simeq \Z/p_i\Z \hspace{2cm}\K_1({\mathfrak{t}_i})\simeq \Z/p_{i-1}\Z
\end{array}
\]
(Note that we have fixed $p_{-1}:=1$, which matches the fact that $\K_1({\mathfrak{j}_0})\simeq \{0\}$.)

We deduce the $\K$-theory of the respective (primitive) ideals $\underset{i\in\N}\oplus \mathfrak{s}_i, \underset{i\in\N}\oplus \mathfrak{t}_i $ of $A$ and $B$. More specifically, we immediately have $\K_0(\underset{i\in\N}\oplus \mathfrak{s}_i)\simeq \K_0(\underset{i\in\N}\oplus \mathfrak{t}_i) $ and the shift that adds a $0$ at the beginning of a sequence gives us an isomorphism between $\K_1(\underset{i\in\N}\oplus \mathfrak{s}_i)\simeq \K_1(\underset{i\in\N}\oplus \mathfrak{t}_i)$. Further, we know that $A/\!\underset{i\in\N}\oplus \mathfrak{s}_i \simeq \mathbb{C}$, and hence the canonical short-exact sequence of the ideal $\underset{i\in\N}\oplus \mathfrak{s}_i$ is in fact $0\longrightarrow \underset{i\in\N}\oplus \mathfrak{s}_i \longrightarrow A \longrightarrow \mathbb{C} \longrightarrow 0$. By the 6-term exact sequence (see e.g. \cite[Theorem 9.3.1]{Bl86}), we have the following exact diagram:
\[
\xymatrix{
\K_1(\underset{i\in\N}\oplus \mathfrak{s}_i)\ar[r] & \K_1(A)\ar[r] & 0\ar[d]\\
\Z\ar[u]^{\delta_1} & \K_0(A)\ar[l]& \K_0(\underset{i\in\N}\oplus \mathfrak{s}_i)\ar[l]
}
\]
Since $\pi:A\longrightarrow \mathbb{C}$ is unital and $\K_0(\mathbb{C})$ is generated by $[1_\mathbb{C}]$, we deduce that $\K_0(A)\longrightarrow \Z$ is surjective and we get that $\ker(\delta_{1})\simeq \Z$. This gives us $\delta_1=0$ and after doing similar computation for $B$, we deduce that
\[
\begin{array}{ll}
(i)\hspace{0,4cm} 0\longrightarrow	\K_0(\underset{i\in\N}\oplus \mathfrak{s}_i)\longrightarrow \K_0(A)\longrightarrow  \Z\longrightarrow 0 \text{ is exact.}
\\
(ii) \hspace{0,3cm}\K_1(A)\simeq \K_1(\underset{i\in\N}\oplus \mathfrak{s}_i).
\end{array}
\]
Finally, $\Z$ is free (as a $\Z$-module), so the exact sequence of (i) is splitting and hence we conclude that
\[
\begin{array}{ll}
(i)\hspace{0,4cm} \K_1(A)\simeq \K_1(\underset{i\in\N}\oplus \mathfrak{s}_i) \simeq \K_1(\underset{i\in\N}\oplus \mathfrak{t}_i)\simeq \K_1(B).\\
(ii) \hspace{0,3cm} \K_0(A)\simeq \K_0(\underset{i\in\N}\oplus \mathfrak{s}_i)\oplus \Z \simeq \K_0(\underset{i\in\N}\oplus \mathfrak{t}_i)\oplus \Z\simeq \K_0(B).
\end{array}
\]
\end{proof}

The next invariant to compute is the Cuntz semigroup. However, such a task is usually very tricky if not impossible, especially when it comes to inductive limits. Therefore, we shall use an analogous version of the approximate intertwining theorem for the category $\Cu$, obtained by the author in \cite{C22}. We mention that the latter needs quite long preliminary notations and hypotheses. Nevertheless, we observe that $\Cu(A_n)\simeq \Cu(B_n)$ for any $n\in\N$, and this fact simplifies the conditions to satisfy in order to get an approximate intertwining between the sequences. Therefore, we shall adapt the theorem, that we recall next, to our specific case and we refer the reader to the original paper for the general case.

As for notations, we denote the uniform basis of size $2^\infty$ of all the semigroups $\Cu(A_n),\Cu(B_n)$ by $(M_j,\epsilon_j)_j$ (see \autoref{sec:EK}) and we do not distinguish to which semigroup we refer to. (We will explicitly precise the latter if needed.) Moreover, for any $n\in\N$ and any $0\leq i\leq n+1$, we write $\alpha^i_{nn+1}:=\Cu(\phi_{nn+1}^i)$ and $\beta^i_{nn+1}:=\Cu(\psi_{nn+1}^i)$. We also write $\alpha_{nm}:=\Cu(\phi_{nm})$ and $\beta_{nm}:=\Cu(\psi_{nm})$ for any $n\leq m$. Using matrix representation, we can rewrite
\vspace{0,2cm}\[
\alpha_{nn+1}:=\left(\begin{smallmatrix}\alpha^0_{nn+1} & & & & \\  &&\ddots &&\\  &&&\ddots&\\ & & & & \alpha^n_{nn+1} \\  & &  & & \alpha^{n+1}_{nn+1} \end{smallmatrix}\right)
\hspace{1cm} \beta_{nn+1}:=\left(\begin{smallmatrix}\beta^0_{nn+1} & & & & \\  &&\ddots &&\\  &&& \ddots&\\ & & & & \beta^n_{nn+1} \\  & &  & & \beta^{n+1}_{nn+1} \end{smallmatrix}\right)
\vspace{0,2cm}\]
Finally, we recall that if $S:=\underset{r}{\oplus}\, S_i$ where each $S_i$ has a uniform basis $\mathcal{B}_i$, then the concatenation of the bases $\mathcal{B}_i$ is a uniform basis of $S$. (See \cite[Proposition 3.3]{C22}.) A direct consequence is that the associated semi-metric to the uniform basis $\mathcal{B}$ is $dd_{\Cu,\mathcal{B}}:=\max\limits_r (dd_{\Cu,\mathcal{B}_i})$. 

In our case, we obtain that
\begin{align*}
dd_{\Cu,\mathcal{B}_{2^\infty}}(\alpha_{nn+1}, \beta_{nn+1})&=\max\limits_{0\leq i\leq n+1}(dd_{\Cu,\mathcal{B}_{2^\infty}}(\alpha^i_{nn+1}, \beta^i_{nn+1}))\\
&=\max\limits_{0\leq i\leq n-1}(dd_{\Cu,\mathcal{B}_{2^\infty}}(\alpha^i_{nn+1}, \beta^i_{nn+1})).
\end{align*}
Again, we refer the reader to \autoref{sec:EK} for the construction of $dd_{\Cu,\mathcal{B}_{2^\infty}}$.

\begin{thm}\cite[Theorem 3.15]{C22}
\label{thm:approxinter}
Let $(\Cu(A_n),\alpha_{nm})_{n\in\N}$ and $(\Cu(B_n),\beta_{nm})_{n\in\N}$ be the inductive sequences obtained by applying $\Cu$ to the inductive sequences of $A$ and $B$ respectively. Assume that there exists a strictly increasing sequence $(j_n)_n$ of natural numbers such that:

(i) For any $n\leq m$, we have $\alpha_{nm}(M_{j_n})\subseteq M_{j_m}$ and $\beta_{nm}(M_{j_n})\subseteq M_{j_m}$.

(ii) For any $n\in\N$, we have that $dd_{\Cu,\mathcal{B}_{2^\infty}}(\alpha_{nn+1}, \beta_{nn+1})< 1/2^{j_n}$.

Then $\Cu(A)\simeq \Cu(B)$.
\end{thm}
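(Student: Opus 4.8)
The plan is to run a Cuntz-semigroup analogue of Elliott's two-sided approximate intertwining argument, with the metric $dd_{\Cu,\mathcal{B}_{2^\infty}}$ in the role played by the sup-norm distance between $^*$-homomorphisms in the $\CatCa$-algebra setting. First I would record that, since $\Cu$ is a continuous functor (it preserves sequential inductive limits), there are canonical identifications $\Cu(A)\simeq\varinjlim(\Cu(A_n),\alpha_{nm})$ and $\Cu(B)\simeq\varinjlim(\Cu(B_n),\beta_{nm})$ together with the universal morphisms $\alpha_{n\infty}:\Cu(A_n)\to\Cu(A)$ and $\beta_{n\infty}:\Cu(B_n)\to\Cu(B)$. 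Next I would exploit the computation of \autoref{sec:EK}: since $\Cu(\mathcal{I}^{l}_{q,e})$ depends only on $q$ and $l$ and not on the projection $e$, the blocks $M_{[n,i]}(\mathcal{I}^{n-i}_{q_i})$ and $M_{[n,i]}(\mathcal{J}^{n-i}_{q_i})$ have the same Cuntz semigroup. Hence $\Cu(A_n)$ and $\Cu(B_n)$ may be taken to be one and the same object $S_n$, carrying a single uniform basis with pieces $M_j$, and the whole problem reduces to comparing the two connecting systems $\{\alpha_{nm}\}$ and $\{\beta_{nm}\}$ on a common tower of semigroups.

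The heart of the matter is to convert hypotheses (i)--(ii) into a genuine two-sided approximate intertwining. Unwinding (ii), for each $n$ and each pair $g'\ll g$ in the basis piece $M_{j_n}$ we have $\alpha_{nn+1}(g')\le\beta_{nn+1}(g)$ and, symmetrically, $\beta_{nn+1}(g')\le\alpha_{nn+1}(g)$. Hypothesis (i) guarantees these one-step estimates are stable under composition: because $\alpha_{nm}$ and $\beta_{nm}$ send $M_{j_n}$ into $M_{j_m}$, I can telescope along the tower and keep the accumulated discrepancy between $\alpha_{nm}$ and $\beta_{nm}$ bounded by the tail $\sum_{k=n}^{m-1}1/2^{j_k}$, which is summable because $(j_n)_n$ is strictly increasing. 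This is precisely the input required by the abstract intertwining result \cite[Theorem 3.15]{C22}, and the substance of the argument here is the verification that the specific diagram at hand satisfies its hypotheses rather than any new abstract machinery.

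With the approximate intertwining in place I would then construct the limit isomorphism directly. Define $\Phi:\Cu(A)\to\Cu(B)$ by writing an element $x\in\Cu(A)$ as $x=\sup_k x_k$ with $x_k\ll x_{k+1}$ (axiom (O2)), approximating each $x_k$ from below by an element of a basis piece $M_{j_n}$ at a suitably high level $n$, transporting it through the identification and $\beta_{n\infty}$, and taking the supremum of the images (axiom (O1)). The approximate commutativity shows this value is independent of the chosen approximants and that $\Phi$ is additive, order-preserving and compatible with $\ll$ and with suprema, hence a $\Cu$-morphism; a symmetric prescription produces $\Psi:\Cu(B)\to\Cu(A)$. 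Finally I would verify $\Psi\circ\Phi=\id_{\Cu(A)}$ and $\Phi\circ\Psi=\id_{\Cu(B)}$ by evaluating on $\ll$-chains: the zigzag composites differ from the respective identities by errors controlled by the tails $\sum_{k\ge n}1/2^{j_k}$, which vanish as $n\to\infty$, so the approximate identities become exact after passing to suprema.

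The step I expect to be the main obstacle is exactly this passage from metric (approximate) inequalities to exact morphism properties on the limits. The metric only ever furnishes relations of the shape $\alpha(g')\le\beta(g)$ for $g'\ll g$, never outright equalities, so the delicate work is to use the way-below relation and axiom (O2) systematically in order to obtain well-definedness of $\Phi$ and the mutual-inverse identities on the nose. Keeping the accumulated errors summable, via hypothesis (i) together with the strict growth of $(j_n)_n$, is what makes this upgrade from approximate to exact possible.
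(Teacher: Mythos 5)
You should first note that the paper does not prove this statement at all: \autoref{thm:approxinter} is imported verbatim from \cite[Theorem 3.15]{C22}, merely specialized to the situation at hand (the specialization being that $\Cu(A_n)\simeq\Cu(B_n)$ because $\Cu(\mathcal{I}^l_{q,e})$ does not depend on the projection $e$, exactly as you observe, so the intertwining maps between the two towers may be taken to be identities and the hypotheses collapse to conditions on the connecting maps alone). There is therefore no internal proof to compare against; what you have written is a reconstruction of the external argument, and in outline it is the right one: continuity of $\Cu$ to identify the limits, one-step estimates from (ii), composability from (i), and limit morphisms $\Phi,\Psi$ built from basis approximants using (O1)/(O2), verified to be mutually inverse on $\ll$-chains.

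One step of your sketch is, however, stated in a form that would fail if taken literally: the claim that the accumulated discrepancy between $\alpha_{nm}$ and $\beta_{nm}$ is \emph{bounded by the tail} $\sum_{k=n}^{m-1}1/2^{j_k}$. The semi-metric $dd_{\Cu,\mathcal{B}}$ does not propagate along compositions by adding numerical errors; the relation $\alpha\underset{M}{\simeq}\beta$ only furnishes inequalities $\alpha(g')\leq\beta(g)$ for $g'\ll g$, and the correct telescoping mechanism is $\ll$-interpolation, not summation. Concretely: pick $g'\ll g''\ll g$ compatibly with the basis; then $\alpha_{nn+1}(g')\leq\beta_{nn+1}(g'')$, and since hypothesis (i) places $\beta_{nn+1}(g'')$ and $\beta_{nn+1}(g)$ inside $M_{j_{n+1}}$ while $\Cu$-morphisms preserve $\ll$, the stage-$(n+1)$ estimate applies to these images and yields $\alpha_{nn+2}(g')\leq\beta_{nn+2}(g)$, and so on inductively. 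Each composition step consumes one link of a $\ll$-chain, so comparing $\alpha_{nm}$ with $\beta_{nm}$ for all $m$ requires interpolating chains of unbounded length; these are exactly what the uniform basis supplies, via $\epsilon_j(s)\ll\epsilon_{j'}(s)\ll s$ for $j<j'$, and this is also where the strict monotonicity of $(j_n)_n$ genuinely enters --- it guarantees cofinally many basis levels along which to interpolate, not summability of errors. With the telescoped inequalities obtained this way, your construction of $\Phi$ and $\Psi$ (well-definedness on approximants, passage to suprema, and the exact mutual-inverse identities in the limit) goes through as you describe.
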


\begin{lma}
For any $n\in\N$, we have $dd_{\Cu,\mathcal{B}_{2^\infty}}(\alpha_{nn+1}, \beta_{nn+1})\leq 1/2^n $.
\end{lma}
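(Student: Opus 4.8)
The plan is to use the reduction already recorded before the statement, namely that $dd_{\Cu,\mathcal{B}_{2^\infty}}(\alpha_{nn+1},\beta_{nn+1})=\max_{0\le i\le n-1}dd_{\Cu,\mathcal{B}_{2^\infty}}(\alpha^i_{nn+1},\beta^i_{nn+1})$, so that it suffices to prove $dd_{\Cu,\mathcal{B}_{2^\infty}}(\alpha^i_{nn+1},\beta^i_{nn+1})\le 1/2^n$ for each fixed $0\le i\le n-1$. By the definition of the semi-metric this is exactly the approximation $\alpha^i_{nn+1}\underset{\Lambda_n}{\simeq}\beta^i_{nn+1}$, so I must verify, for every pair $g'\ll g$ in $\Lambda_n$, the two inequalities $\alpha^i_{nn+1}(g')\le\beta^i_{nn+1}(g)$ and $\beta^i_{nn+1}(g')\le\alpha^i_{nn+1}(g)$. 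Since the order on the codomain $\Cu(\mathcal{I}^{n-i+1}_{q_i})\subseteq\Lsc([0,1],\overline{\N})$ is the pointwise order on rank functions, it is enough to check these inequalities pointwise in the target variable $s\in[0,1]$.

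First I would make the two partial morphisms explicit on rank functions. Writing $R$ (resp. $R'$) for the unnormalised rank of $g$ (resp. $g'$) and recalling $\xi_0(s)=s/2$, $\xi_1(s)=1-s/2$, the formula for $\phi^i_{nn+1}$ and $\rank(e^i_A)=p_{i-1}$ give
\[
\alpha^i_{nn+1}(g)(s)=p_{i-1}R(s/2)+(q_i-p_{i-1})R(1-s/2)+q_i\sum_{k=1}^{q_i^{r_n}-1}R(k/q_i^{r_n}),
\]
and the same expression with $p_{i-1}$ replaced by $p_i=\rank(e^i_B)$ for $\beta^i_{nn+1}$. The decisive point is that the two morphisms share the \emph{entire} constant block $q_i\sum_k R(k/q_i^{r_n})$ and differ only through the first folding block, so that
\[
\beta^i_{nn+1}(g)(s)-\alpha^i_{nn+1}(g)(s)=(p_i-p_{i-1})\bigl(R(s/2)-R(1-s/2)\bigr),
\]
a quantity bounded in absolute value by $(p_i-p_{i-1})$ times a single jump height of $g$ (normalisation is a common positive scalar and does not affect the order).

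Next I would extract a compensating term from the way-below relation. Since $g'\ll g$ and both lie in $\Lambda_n$, i.e. are step functions on the dyadic partition of size $2^{n+1}$, $g'$ must drop strictly below $g$ across each jump of $g$, over a region of length at least $1/2^{n+1}$. That drop is sampled by the fine evaluation grid $\{k/q_i^{r_n}\}$ occurring in the common constant block: the $\approx q_i^{r_n}/2^{n+1}$ sample points lying in the drop region each contribute the full jump height to $\Sigma-\Sigma':=\sum_k\bigl(R(k/q_i^{r_n})-R'(k/q_i^{r_n})\bigr)$, so the constant block of $\alpha^i_{nn+1}(g)$ exceeds that of $\alpha^i_{nn+1}(g')$ by at least $(q_i^{r_n}/2^{n+1})$ times that jump height. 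Comparing against the first-block discrepancy, both terms scale linearly in the jump height while the constant-block surplus carries the extra factor $q_i^{r_n}/2^{n+1}\ge 1$, using $q_i\ge 2$ and $r_n\ge n+2$ (the latter because $(r_k)$ is strictly increasing with $r_0\ge 2$, so $r_n\ge r_0+n$). This forces $\alpha^i_{nn+1}(g')\le\beta^i_{nn+1}(g)$, and the reverse inequality is entirely symmetric upon interchanging $\xi_0$ and $\xi_1$.

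The main obstacle will be the bookkeeping in this last step: the discrepancy is \emph{pointwise} in $s$, whereas the compensating surplus $q_i(\Sigma-\Sigma')$ is \emph{global}. One must therefore argue that, for the worst target point $s$, the jump of $g$ responsible for the large value $R(1-s/2)$ (resp. $R(s/2)$) is genuinely one across which $g'\ll g$ has been forced to drop, and that the fine grid adequately samples that drop; this has to be carried out uniformly over all admissible step functions, including the boundary configurations at $s=1$, where $\xi_0(s)$ and $\xi_1(s)$ collide at $1/2$, and at the endpoints, where the constraints $g(0),g(1)\in\frac{q_i}{q_i^{n-i}}\overline{\N}$ intervene. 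Once these cases are dispatched one obtains $\alpha^i_{nn+1}\underset{\Lambda_n}{\simeq}\beta^i_{nn+1}$, hence $dd_{\Cu,\mathcal{B}_{2^\infty}}(\alpha^i_{nn+1},\beta^i_{nn+1})\le 1/2^n$, and taking the maximum over $0\le i\le n-1$ yields the claim.
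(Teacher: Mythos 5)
Your setup is sound and matches the paper's: the reduction to $\alpha^i_{nn+1}\underset{\Lambda_n}{\simeq}\beta^i_{nn+1}$ for $0\leq i\leq n-1$, the explicit rank formulas showing that the two partial morphisms share the entire constant (evaluation) block and differ only in the folding block by $(p_i-p_{i-1})\bigl(R(s/2)-R(1-s/2)\bigr)$, and the idea that the surplus in the constant block forced by $g'\ll g$ must absorb this discrepancy. But the decisive step, which you yourself flag as "bookkeeping" and defer, is where the proof actually lives, and your sketch of it contains a genuine error: the claim that the discrepancy is bounded by $(p_i-p_{i-1})$ times a \emph{single} jump height of $g$ is false, since $R(s/2)-R(1-s/2)$ accumulates all the signed jumps of $g$ between the two sample points and can be as large as the full range $q_i$ (in unnormalized units), e.g.\ for a staircase $g$. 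Consequently your comparison "both terms scale linearly in the jump height while the surplus carries the extra factor $q_i^{r_n}/2^{n+1}\geq 1$" does not close: the worst-case deficit is of order $(p_i-p_{i-1})\,q_i$, so the surplus needs an extra factor of order $q_i$ per unit jump, not merely $\geq 1$; moreover a per-jump accounting faces a double-counting problem, since two distinct jumps of $g$ (one at each endpoint of a partition cell) can force $g'$ to drop on the \emph{same} cell, so the forced drops take a maximum rather than add.

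The paper sidesteps all of this with a cruder, global bound that never analyzes the pointwise discrepancy: since every test element $g'\in\Lambda$ is dominated by $q_i\cdot\bigl(\tfrac{1}{q_i^{n-i}}1_{[0,1]}\bigr)$, the \emph{entire} folding block of $\alpha^i_{nn+1}(g')$ and $\beta^i_{nn+1}(g')$ is bounded above by the constant function $q_i\bigl(\tfrac{q_i}{q_i^{n-i+1}}1_{[0,1]}\bigr)$, while the folding block of $\alpha^i_{nn+1}(g)$, $\beta^i_{nn+1}(g)$ is bounded below by $0$. If $g'\neq g$ then $\supp g'\subsetneq \supp g$ produces a single cover interval $U_k$ with $g'_{U_k}=0$ and $g_{U_k}\geq 1$, and the paper works at the finer scale $2^{l_{n,i}}\leq q_i^{r_n-1}$ precisely so that each cover interval contains $c_k\geq q_i$ evaluation points; this one interval's surplus $c_k\,g_{U_k}\geq q_i$ (in units of the scaled constant function) then absorbs the whole folding deficit at every $s$ simultaneously, yielding $dd_{\Cu,\mathcal{B}_{2^\infty}}(\alpha^i_{nn+1},\beta^i_{nn+1})\leq 1/2^{l_{n,i}}\leq 1/q_i^{r_n-2}\leq 1/2^n$ via $r_n\geq n+2$. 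Your variant at scale $\Lambda_n$ can be repaired along the same lines without any per-jump decomposition: if $g$ is constant, the endpoint constraints force $g\in\{0,\,q_i\cdot\tfrac{1}{q_i^{n-i}}1_{[0,1]}\}$ and the deficit is $\leq 0$ by $R'\leq R$; otherwise $g$ has at least one jump, forcing $g'$ to drop by at least one unit on a cell of length $1/2^{n+1}$ containing on the order of $q_i^{r_n}/2^{n+1}\geq q_i$ grid points, and that single surplus already dominates the maximal possible deficit. As written, however, your proof is incomplete at exactly the step that carries the content of the lemma.
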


\begin{proof}
Let $n\in\N$. We already know that $\alpha^{n}_{nn+1} = \beta^{n}_{nn+1}$ and that $\alpha^{n+1}_{nn+1} = \beta^{n+1}_{nn+1}$ since the $^*$-homomorphisms are identical. This leads to $dd_{\Cu,\mathcal{B}_{2^\infty}}((\alpha^n_{nn+1},\alpha^{n+1}_{nn+1}),(\beta^n_{nn+1},\beta^{n+1}_{nn+1}))=0$. We are going to show that $dd_{\Cu,\mathcal{B}_{2^\infty}}(\alpha^i_{nn+1}, \beta^i_{nn+1})\leq 1/q_i^{r_n-2}$ for any $0\leq i\leq n$ and the result will follow.

Let $0\leq i\leq n-1$. We define $l_{n,i}:=\max\limits_{l\in\N} \{l\mid 2^l\leq q_i^{r_n}/q_i\}$. Observe that $q_i/q_i^{r_n}\leq 1/2^{l_{n,i}}\leq 1/q_i^{r_n-2}$. Now let $\{\overline{U_k}\}_{1}^{2^{l_{n,i}}}$ be the finite closed cover of $[0,1]$ of size $2^{l_{n,i}}$ as constructed in \autoref{sec:uniformbasis}. Let us write
\[
\begin{array}{ll}
c_0:= \card \,(\,\,\{1/q_i^{r_n},\dots,(q_i^{r_n}-1)/q_i^{r_n}\}\cap ([0,1]\setminus (\underset{k=1}{\overset{2^n}\cup}U_k))\,)\\
c_k:= \card\,(\,\{1/q_i^{r_n},\dots,(q_i^{r_n}-1)/q_i^{r_n}\}\cap U_k)
\end{array}
\]
for any $1\leq k \leq 2^{l_{n,i}}$.
From the way we chose $l_{n,i}$, we know that each open interval $U_k$ contains at least $q_i$ points of the set $\{1/q_i^{r_n},\dots,(q_i^{r_n}-1)/q_i^{r_n}\}$.  In other words, $c_k\geq q_i$ for any $k$.  Let us show $dd_{\Cu,\mathcal{B}_{2^\infty}}(\alpha^i_{nn+1}, \beta^i_{nn+1})\leq 1/2^{l_{n,i}}$. We start by observing that for any $h:=[f_h]\in \Lambda_{l_{n,i}}\subseteq\Cu(\mathcal{I}_{q_i}^{n-i})$ (see the last paragraph of \autoref{sec:EK}), we have
\[
\small\left\{
\begin{array}{ll}
\alpha^i_{nn+1}(h)= [f_h(\xi_0)\otimes e_A^i+ f_h(\xi_1)\otimes (1-e_A^i)]_{\Cu}+ \sum\limits_{k=1}^{2^{l_{n,i}}}c_k (h_{U_k}{\frac{q_i}{q_i^{n-i+1}}1_{[0,1]}})+ \sum\limits_{x\in c_0}(h_x{\frac{q_i}{q_i^{n-i+1}}1_{[0,1]}})\\
\beta^i_{nn+1}(h)= [f_h(\xi_0)\otimes e_B^i+ f_h(\xi_1)\otimes (1-e_B^i)]_{\Cu}+ \sum\limits_{k=1}^{2^{l_{n,i}}}c_k (h_{U_k}{\frac{q_i}{q_i^{n-i+1}}1_{[0,1]}})+ \sum\limits_{x\in c_0}(h_x{\frac{q_i}{q_i^{n-i+1}}1_{[0,1]}})
\end{array}
\right.
\]
where $h_{U_k},h_x\in\{0,\dots, q_i\}$ are respectively defined by the values of $h$ on $U_k$ and at $x$, multiplied by $q_i^{n-i}$ . In other words, we have that $h_{|U_k}=h_{U_k}/q_i^{n-i}$ and $h(x)=h_x/q_i^{n-i}$.

Now, let $h', h$ be elements of $\Lambda_{l_{n,i}}$ such that $h'\ll h$.  In particular, $0\leq (h'\ll)\, h$ and that $h' (\ll h) \leq q_i.(\frac{1}{q_i^{n-i}}.1_{[0,1]})$. Combined with the fact that $[1_{\mathcal{I}_{q_i}^{n-i}}(\xi_0)\otimes e_A^i+ 1_{\mathcal{I}_{q_i}^{n-i}}(\xi_1)\otimes (1-e_A^i)]_{\Cu}=[1_{\mathcal{I}_{q_i}^{n-i}}(\xi_0)\otimes e_B^i+ 1_{\mathcal{I}_{q_i}^{n-i}}(\xi_1)\otimes (1-e_B^i)]_{\Cu}=q_i(\frac{q_i}{q_i^{n-i+1}}.1_{[0,1]})$ and

 $[0_{\mathcal{I}_{q_i}^{n-i}}(\xi_0)\otimes e_A^i+ 0_{\mathcal{I}_{q_i}^{n-i}}(\xi_1)\otimes (1-e_A^i)]_{\Cu}=[0_{\mathcal{I}_{q_i}^{n-i}}(\xi_0)\otimes e_B^i+ 0_{\mathcal{I}_{q_i}^{n-i}}(\xi_1)\otimes (1-e_B^i)]_{\Cu}=0$, we obtain the following bounds:
\[
\begin{array}{ll}
\alpha^i_{nn+1}(h'), \beta ^i_{nn+1}(h')\leq q_i(\frac{q_i}{q_i^{n-i+1}}.1_{[0,1]})+ \sum\limits_{k=1}^{2^{l_{n,i}}}c_k (h'_{U_k}{\frac{q_i}{q_i^{n-i+1}}1_{[0,1]}})+ \sum\limits_{x\in c_0}(h'_x{\frac{q_i}{q_i^{n-i+1}}1_{[0,1]}}).\\
\alpha^i_{nn+1}(h), \beta^i_{nn+1}(h)\geq 0+ \sum\limits_{k=1}^{2^{l_{n,i}}} c_k  (h_{U_k}{\frac{q_i}{q_i^{n-i+1}}1_{[0,1]}})+\sum\limits_{x\in c_0} (h_x{\frac{q_i}{q_i^{n-i+1}}1_{[0,1]}}).
\end{array}
\]
It is sufficient to prove that the lower bound of $\alpha^i_{nn+1}(h), \beta^i_{nn+1}(h)$ is greater than the upper-bound of $\alpha^i_{nn+1}(h'), \beta ^i_{nn+1}(h')$. First, we immediately see that 
\[
\sum\limits_{x\in c_0} (h'_x{\frac{q_i}{q_i^{n-i+1}}1_{[0,1]}})\leq \sum\limits_{x\in c_0} (h_x{\frac{q_i}{q_i^{n-i+1}}1_{[0,1]}}).
\]
Then, if $\supp h'=\supp h$ (or equivalently, if $h=h'=q_i.(\frac{1}{q_i^{n-i}} 1_{[0,1]})$), we easily see that $\alpha(h)=\beta(h)$. Else, we have that $\supp h'\subsetneq \supp h$ and thus we can find at least one interval $U_k$ such that $h'_{U_k}=0$ and $h_{U_k}\geq 1$. Combined with the fact that $c_k\geq q_i$, we deduce that
\[
 q_i(\frac{q_i}{q_i^{n-i+1}}1_{[0,1]})+ \sum\limits_{k=1}^{2^{l_{n,i}}} c_k (h'_{U_k}{\frac{q_i}{q_i^{n-i+1}}1_{[0,1]}})\leq \sum\limits_{k=1}^{2^{l_{n,i}}} c_k (h_{U_k}{\frac{q_i}{q_i^{n-i+1}}1_{[0,1]}}).
\]
 Putting everything together, we have  that for any $h', h\in\Lambda_{l_{n,i}}$ such that $h'\ll h$, then
\[\alpha^i_{nn+1}(h'), \beta ^i_{nn+1}(h')\leq \alpha^i_{nn+1}(h), \beta^i_{nn+1}(h).\]
In other words, $dd_{\Cu,\mathcal{B}_{2^\infty}}(\alpha^i_{nn+1},\beta^i_{nn+1})\leq 1/2^{l_{n,i}}\leq 1/q_i^{r_n-2}$. Finally, we know that $(r_k)_k$ is a strictly increasing sequence of natural numbers with $r_0\geq 2$. Thus, we conclude that 
\[
dd_{\Cu,\mathcal{B}_{2^\infty}}(\alpha_{nn+1}, \beta_{nn+1})=\max\limits_{i=0,..,n-1}(dd_{\Cu,\mathcal{B}_{2^\infty}}(\alpha^i_{nn+1},\beta^i_{nn+1}))\leq 1/q_0^{r_n-2}\leq 1/2^n.
\] 
\end{proof}

\begin{cor}
The approximate intertwining theorem gives us $\Cu(A)\simeq \Cu(B)$.
\end{cor}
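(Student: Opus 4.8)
The plan is to verify that the hypotheses of the approximate intertwining theorem (\autoref{thm:approxinter}) are met and then simply invoke it. The previous lemma has already established the crucial metric estimate $dd_{\Cu,\mathcal{B}_{2^\infty}}(\alpha_{nn+1},\beta_{nn+1})\leq 1/2^n$, so most of the analytic work is done. What remains is to package this into the precise form required by the theorem, namely to exhibit a strictly increasing sequence $(j_n)_n$ satisfying both the basis-compatibility condition (i) and the strict metric bound (ii).

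First I would address condition (ii), the metric bound $dd_{\Cu,\mathcal{B}_{2^\infty}}(\alpha_{nn+1},\beta_{nn+1})< 1/2^{j_n}$. Since the previous lemma gives an upper bound of $1/2^n$, the bound is strict as soon as we pick $j_n < n$, for instance $j_n := n-1$ (or any strictly increasing sequence dominated by $n$). One must only take care at the initial index so that $(j_n)_n$ is strictly increasing and the inequality is strict rather than merely an equality; choosing $j_n$ strictly smaller than $n$ handles this cleanly.

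Second I would verify condition (i), the compatibility $\alpha_{nm}(M_{j_n})\subseteq M_{j_m}$ and $\beta_{nm}(M_{j_n})\subseteq M_{j_m}$. Here one observes that the $\Cu$-morphisms induced by the connecting $^*$-homomorphisms send lower-semicontinuous functions that are locally constant on the cover of size $2^{j_n}$ to functions that are locally constant on a coarser grid; since the partial maps are evaluation-type maps at the points $\xi_0,\xi_1$ and at the dyadic-compatible points $k/q_i^{r_n}$, and since the cover of size $2^\infty$ refines under these pullbacks, the image indeed lands in the basis $M_{j_m}$ for sufficiently slowly growing $(j_n)$. The freedom in choosing $(j_n)$ to grow slowly (while still strictly increasing) is exactly what makes both (i) and (ii) simultaneously satisfiable.

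The hard part, conceptually, is not the final invocation but ensuring that a \emph{single} sequence $(j_n)_n$ works for both conditions at once: condition (ii) pushes $j_n$ to be small (to keep $1/2^{j_n}$ above the error $1/2^n$), while condition (i) may require $j_n$ to be large enough that the images of basis elements remain inside the chosen finite-level monoids. Fortunately, the metric estimate $1/2^n$ decays geometrically and leaves ample room, so any sufficiently slowly increasing choice such as $j_n=n-1$ reconciles the two demands. Once $(j_n)_n$ is fixed, \autoref{thm:approxinter} applies verbatim and yields the two-sided approximate intertwining, whence $\Cu(A)\simeq\Cu(B)$.
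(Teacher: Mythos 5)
Your proposal is correct and takes essentially the same route as the paper: the paper likewise exhibits an explicit slowly growing sequence (it takes $j_n:=n$), cites the preceding lemma for the metric condition (ii), and verifies the basis condition (i) by checking $\alpha^i_{nn+1}(M_j)\subseteq M_{j-1}\subseteq M_j$ and $\beta^i_{nn+1}(M_j)\subseteq M_{j-1}$ (pullback along $\xi_0,\xi_1$ coarsens the dyadic grid by exactly one level), the remaining evaluation-type blocks landing in compact elements that lie in every $M_j$. Your choice $j_n:=n-1$ actually treats the strict-versus-weak inequality $dd_{\Cu,\mathcal{B}_{2^\infty}}(\alpha_{nn+1},\beta_{nn+1})<1/2^{j_n}$ more carefully than the paper's $j_n:=n$ does; you need only patch the initial index $n=0$ (e.g., by noting $\phi_{01}=\psi_{01}$, so that the distance there is $0$).
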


\begin{proof}
We claim that the sequence $(j_n)_n$ defined by $j_n:=n$ satisfies the conditions (i) and (ii) of \autoref{thm:approxinter}. We only have to check (ii), since (i) is proven by the previous lemma. 

Let $n\in\N$, let $0\leq i\leq n-1$ and let $j\in\N$. One can check that we have $\alpha^i_{nn+1}(M_j) \subseteq M_{j-1}$ in $\Cu(A_{n+1})$ and $\beta^i_{nn+1}(M_j)\subseteq M_{j-1}$ in $\Cu(B_{n+1})$.
 Also, for any $g\in\Lsc([0,1],\N)$, the elements $\alpha^n_{nn+1}(g),\alpha^{n+1}_{nn+1}(g)$ and $\beta^n_{nn+1}(g),\beta^{n+1}_{nn+1}(g)$ are compact elements, and hence belong to all $M_j$ of their respective $\Cu$-semigroup. 
 
We conclude that $\alpha_{nm}(M_j)\subseteq M_j$ in $\Cu(A_m)$ and that $\beta_{nm}(M_j)\subseteq M_j$ in $\Cu(B_m)$, for any $n,m$ in $\N$ and any $j\in\N$, which ends the proof.
\end{proof}

We have seen that $A$ and $B$ agree on their $\K$-theory and their Cuntz semigroups. In the simple case, this would have been enough to conclude that $A$ and $B$ were isomorphic. Yet, we see in the sequel that $A\nsimeq B$ since they are distinguish by the unitary Cuntz semigroup. The conclusion is twofold: the data from the original Elliott invariant and/or the Cuntz semigroup is not enough to classify non-simple $\CatCa$-algebras (that have $\K_1$-obstruction) and the unitary Cuntz semigroup seems a promising candidate that brings extra data needed to obtain a classification result. Therefore, one could hope for a classification of a certain class of inductive limits of one-dimensional $\NCCW$-complexes by means of their unitary Cuntz semigroup. 

In order to show that $\Cu_1(A)\nsimeq\Cu_1(B)$, an explicit computation of $\Cu_1(A)$ and $\Cu_1(B)$, far more complex that the one of $\Cu(A)$ and $\Cu(B)$, would be nearly impossible. And even though it is not beyond reason to expect that an approximate intertwining theorem for unitary Cuntz semigroups can be develop at some point, we have to take a different approach for now. Instead, we are using the structure of the simple ideals of $A$ and $B$ together with an exact diagram (in $\Cu^\sim$) linking the Cuntz semigroup, its unitary version and the $\K_1$-group that we recall now.

\begin{thm}
\label{prg:transposetoalgebras}\cite[Theorem 4.16]{C21b}
Let $A,B\in \CatCa$. Let $\phi:A\longrightarrow B$ be a $^*$-homomorphism. Let $I\in\Lat(A)$. Write $J:=\overline{B\phi(I)B}$, the smallest ideal of $B$ containing $\phi(I)$ and $\alpha:=\Cu_1(\phi)$. We also write $\alpha_{|\Cu_1(I)}:\Cu_1(I)\longrightarrow \Cu_1(J)$, ${\alpha_0}_{|\Cu(I)}:\Cu(I)\longrightarrow \Cu(J)$ and $\alpha_I: \K_1(I)\longrightarrow \K_1(J)$. Then the following diagram is commutative with exact rows:
\[
\xymatrix{
0\ar[r]^{} & \Cu(I)\ar[d]_{{\alpha_0}_{|\Cu(I)}}\ar[r]^{} & \Cu_1(I) \ar[d] _{\alpha_{|\Cu_1(I)}}\ar[r]^{} & \K_1(I) \ar[d]^{\alpha_I}\ar[r]^{} & 0
\\
0\ar[r]^{} & \Cu(J)\ar[r]_{} & \Cu_1(J)\ar[r]_{} & \K_1(J)\ar[r]^{} & 0
} 
\]
Furthermore, if $\alpha$ is a $\Cu^\sim$-isomorphism, then $\alpha(\Cu_1(I))=\Cu_1(J)$ and $\alpha_{|\Cu_1(I)}
$ is a $\Cu^\sim$-isomorphism. A fortiori, we also have ${\alpha_0}_{|\Cu(I)}
$ is a $\Cu$-isomorphism and $\alpha_I
$ is a $\AbGp$-isomorphism.
\end{thm}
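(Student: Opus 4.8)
The plan is to reduce the entire statement to the single restricted $^*$-homomorphism $\phi_{|I}\colon I\longrightarrow J$, which is well defined precisely because $\phi(I)\subseteq J$ and $J$ is an ideal of $B$. First I would check that the three vertical arrows make sense, i.e. that $\alpha=\Cu_1(\phi)$ maps $\Cu_1(I)$ into $\Cu_1(J)$ (and similarly for $\alpha_0$ and for $\K_1$). This is immediate from the explicit formula for $\Cu_1(\phi)$ on representatives, $[(a,u)]\longmapsto[((\phi\otimes\id_\KK)(a),(\phi\otimes\id_\KK)^\sim(u))]$: if $a\in(I\otimes\KK)_+$ then $(\phi\otimes\id_\KK)(a)\in(J\otimes\KK)_+$, so the image lies in $\Cu_1(J)$. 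Consequently $\alpha_{|\Cu_1(I)}$, ${\alpha_0}_{|\Cu(I)}$ and $\alpha_I$ coincide with $\Cu_1(\phi_{|I})$, $\Cu(\phi_{|I})$ and $\K_1(\phi_{|I})$ respectively.

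Next I would obtain the two rows and their commutativity simultaneously from the split-exact sequence recalled above. Applied to the $\CatCa$-algebras $I$ and $J$, that sequence furnishes the exact rows $0\to\Cu(I)\to\Cu_1(I)\to\K_1(I)\to 0$ and $0\to\Cu(J)\to\Cu_1(J)\to\K_1(J)\to 0$. The horizontal arrows are the natural maps of the sequence, namely the inclusion of the positive cone (the transformation induced by $\nu_+$) and the quotient onto maximal elements (induced by $\nu_{\max}$). Since both are natural in the $\CatCa$-algebra, they commute with the morphisms induced by $\phi_{|I}$, which yields commutativity of the two squares.

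The heart of the matter is the second assertion: if $\alpha$ is a $\Cu^\sim$-isomorphism, then $\alpha(\Cu_1(I))=\Cu_1(J)$. The inclusion $\subseteq$ is the well-definedness above. For the reverse inclusion I would exploit the lattice isomorphisms $\Lat(A)\simeq\Lat(\Cu_1(A))$ and $\Lat(B)\simeq\Lat(\Cu_1(B))$ together with their naturality. An isomorphism $\alpha$ carries the $\Cu^\sim$-ideal $\Cu_1(I)$ onto a $\Cu^\sim$-ideal of $\Cu_1(B)$, which under $\Lat(\Cu_1(B))\simeq\Lat(B)$ corresponds to some ideal $J'$ of $B$; one then verifies that $J'$ is the smallest ideal of $B$ containing $\phi(I)$, that is $J'=J$. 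I expect this to be the main obstacle, because $\Cu^\sim$-ideals are less well behaved than $\Cu$-ideals (intersections need not be ideals, and ideals generated by arbitrary elements need not exist). The cleanest route is therefore to transport the computation to the positive cones along $\Lat(\Cu_1(B))\simeq\Lat(\Cu(B))$ and to run the argument in $\Cu$, where the standard identification of the $\Cu$-ideal generated by $\Cu(\phi)(\Cu(I))$ with $\Cu(J)$ is available, pulling the conclusion back through the lattice isomorphisms.

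Finally, once $\alpha(\Cu_1(I))=\Cu_1(J)$ is established, $\alpha_{|\Cu_1(I)}$ is the corestriction onto its image of an injective $\Cu^\sim$-morphism, and its inverse is the corresponding restriction of $\alpha^{-1}$; hence it is a $\Cu^\sim$-isomorphism. The statements for ${\alpha_0}_{|\Cu(I)}$ and $\alpha_I$ then follow formally by applying the functors $\nu_+$ and $\nu_{\max}$, which send isomorphisms to isomorphisms and satisfy $\nu_+\circ\Cu_1\simeq\Cu$ and $\nu_{\max}\circ\Cu_1\simeq\K_1$; alternatively, one may invoke the five lemma on the commutative diagram with split-exact rows, using that the middle vertical map is an isomorphism.
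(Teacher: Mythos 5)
The paper does not actually prove this statement: it is imported verbatim as \cite[Theorem 4.16]{C21b}, so there is no in-paper proof to compare yours against. Judged on its own, your assembly is correct and uses precisely the ingredients the paper's preliminaries record: the identification of the intrinsic $\Cu_1(I)$ with an ideal of $\Cu_1(A)$ (so that $\alpha_{|\Cu_1(I)}=\Cu_1(\phi_{|I})$, which is automatically a $\Cu^\sim$-morphism), the naturality of the split-exact sequence $0\to S_+\to S\to S_{\max}\to 0$ of \cite[Theorem 4.13]{C21b} for the rows and the two squares, and, for the key equality $\alpha(\Cu_1(I))=\Cu_1(J)$, the transport to positive cones. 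That last move is exactly the right way to sidestep the pathologies of $\Cu^\sim$-ideals: since $\alpha$ and $\alpha^{-1}$ both preserve order and $0$, one has $\alpha(\Cu_1(I))_+=\alpha_0(\Cu(I))$, the $\Cu$-level argument identifies this with $\Cu(J)$ (the image of an ideal under the isomorphism $\alpha_0$ is an ideal, hence equals the ideal it generates, which corresponds to $J=\overline{B\phi(I)B}$ under $\Lat(B)\simeq\Lat(\Cu(B))$), and injectivity of $\Lat(\Cu_1(B))\to\Lat(\Cu(B))$, $I'\mapsto I'_+$, then forces $\alpha(\Cu_1(I))=\Cu_1(J)$. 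The final deduction for ${\alpha_0}_{|\Cu(I)}$ and $\alpha_I$ via the functors $\nu_+$ and $\nu_{\max}$ is also sound.

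One caveat: drop the five-lemma ``alternative.'' The rows here are exact sequences of ordered monoids in $\Cu^\sim$, not of abelian groups, and no off-the-shelf five lemma is available in that setting (kernels and images do not behave as in an abelian category; indeed the erratum in this very paper turns on cancellation failing in $\Cu^\sim$). Your primary argument through the functoriality of $\nu_+$ and $\nu_{\max}$, together with the natural isomorphisms $\nu_+\circ\Cu_1\simeq\Cu$ and $\nu_{\max}\circ\Cu_1\simeq\K_1$, is the one to keep.
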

\begin{thm}
\label{thm:pepite}
There is no $\Cu^\sim$-isomorphism between $\Cu_1(A)$ and $\Cu_1(B)$. A fortiori, $A\nsimeq B$.
\end{thm}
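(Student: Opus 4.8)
The plan is to argue by contradiction. Suppose there were a $\Cu^\sim$-isomorphism $\alpha \colon \Cu_1(A) \to \Cu_1(B)$. I would extract from $\alpha$ an incompatibility at the level of the $\K_1$-groups of the \emph{simple} ideals, which were pinned down in \autoref{lma:simpleideals} and \autoref{cor:K1iso}. The two tools driving the argument are both recalled in the preliminaries: the lattice isomorphism $\Lat(A) \simeq \Lat(\Cu_1(A))$, $I \mapsto \Cu_1(I)$, and the functor $\nu_{\max}\colon \Cu^\sim \to \AbGp$ satisfying $\nu_{\max}\circ\Cu_1 \simeq \K_1$.

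First I would note that a $\Cu^\sim$-isomorphism carries ideals to ideals bijectively, hence $\alpha$ induces a lattice isomorphism $\Lat(\Cu_1(A)) \to \Lat(\Cu_1(B))$, $\mathcal{I} \mapsto \alpha(\mathcal{I})$. Composing with $\Lat(A)\simeq\Lat(\Cu_1(A))$ and $\Lat(\Cu_1(B))\simeq\Lat(B)$ gives a lattice isomorphism $\Lat(A)\simeq\Lat(B)$. Since a closed two-sided ideal of an ideal is again a closed two-sided ideal of the ambient algebra, a simple ideal is precisely an atom of the ideal lattice; as lattice isomorphisms preserve atoms, I obtain a bijection $\sigma\colon\N\to\N$ with $\alpha(\Cu_1(\mathfrak{s}_i))=\Cu_1(\mathfrak{t}_{\sigma(i)})$ for every $i$, where $\{\mathfrak{s}_i\}_i$ and $\{\mathfrak{t}_i\}_i$ are the simple ideals of $A$ and $B$.

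For each $i$ the isomorphism $\alpha$ then restricts to a $\Cu^\sim$-isomorphism $\Cu_1(\mathfrak{s}_i)\xrightarrow{\simeq}\Cu_1(\mathfrak{t}_{\sigma(i)})$. Applying $\nu_{\max}$ together with the natural isomorphism $\nu_{\max}\circ\Cu_1\simeq\K_1$ forces $\K_1(\mathfrak{s}_i)\simeq\K_1(\mathfrak{t}_{\sigma(i)})$ for all $i$ (alternatively one may invoke \autoref{prg:transposetoalgebras}). But \autoref{cor:K1iso} gives $\K_1(\mathfrak{s}_i)\simeq\Z/p_i\Z$ and $\K_1(\mathfrak{t}_j)\simeq\Z/p_{j-1}\Z$, with $p_{-1}=1$ so that $\K_1(\mathfrak{t}_0)\simeq\{0\}$. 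Since the $p_i$ are distinct primes, $\Z/p_i\Z\simeq\Z/p_{\sigma(i)-1}\Z$ forces $\sigma(i)=i+1$ for every $i$. This map is injective but omits $0\in\N$, contradicting that $\sigma$ is a bijection; equivalently, the simple ideal $\mathfrak{t}_0$ has trivial $\K_1$ while every $\mathfrak{s}_i$ has $\K_1\simeq\Z/p_i\Z\neq 0$, so $\mathfrak{t}_0$ admits no partner. Hence no such $\alpha$ exists.

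The step demanding the most care is not a computation but the abstract bookkeeping: one must verify that an \emph{arbitrary} $\Cu^\sim$-isomorphism, a priori not of the form $\Cu_1(\phi)$, genuinely respects the atom structure of the ideal lattice and restricts to each $\Cu_1(\mathfrak{s}_i)$, so that $\nu_{\max}$ may be applied fibrewise. Once this is secured, the prime-counting obstruction is immediate, since all the $\K_1$-data has already been computed. Finally, the a fortiori statement $A\nsimeq B$ follows from functoriality of $\Cu_1$: an isomorphism $A\simeq B$ would induce a $\Cu^\sim$-isomorphism $\Cu_1(A)\simeq\Cu_1(B)$, which has just been ruled out.
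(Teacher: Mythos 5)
Your proposal is correct, and its skeleton coincides with the paper's proof: assume a $\Cu^\sim$-isomorphism $\gamma$, pass to the induced lattice isomorphism $\Lat(\Cu_1(A))\simeq\Lat(\Cu_1(B))$, match simple ideals (atoms), restrict $\gamma$ to each $\Cu_1(\mathfrak{s}_i)$, and extract $\K_1$-data via the split exact sequence and $\nu_{\max}\circ\Cu_1\simeq\K_1$. Where you genuinely diverge is in the endgame. The paper derives a contradiction for a \emph{single} fixed $i$ by using both halves of the exact sequence: the positive part gives a $\Cu$-isomorphism $\Cu(\mathfrak{s}_i)\simeq\Cu(\mathfrak{t}_j)$, whence $\K_0(\mathfrak{s}_i)\simeq\K_0(\mathfrak{t}_j)$ and $j=i$ (since $\K_0(\mathfrak{s}_i)\simeq\K_0(\mathfrak{t}_i)\simeq\Z[\tfrac{1}{q_i}]$ and the inverted primes $\{p_{i-1},p_i\}$ determine $i$), while the maximal part gives $\K_1(\mathfrak{s}_i)\simeq\K_1(\mathfrak{t}_j)$ and $j=i+1$. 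You instead use only the $\K_1$-constraint, which forces $\sigma(i)=i+1$ globally, and then contradict surjectivity of $\sigma$ (no simple ideal of $A$ can partner $\mathfrak{t}_0$, whose $\K_1$ is trivial). Your pigeonhole variant is arguably more economical: it sidesteps the $\K_0$ step, which tacitly requires knowing that a $\Cu$-isomorphism recovers $\K_0$ (via compact elements, using stable rank one), whereas the paper's version is local in $i$ and does not lean on bijectivity of the ideal correspondence beyond a single ideal. One small caveat: your parenthetical fallback to \autoref{prg:transposetoalgebras} is not literally available, since that theorem is stated for morphisms of the form $\Cu_1(\phi)$ induced by a $^*$-homomorphism, and your $\gamma$ is an abstract $\Cu^\sim$-isomorphism; the correct tool is exactly the one in your main line (the functors $\nu_+,\nu_{\max}$ and the split exact sequence for abstract $\Cu^\sim$-semigroups), which is also what the paper's own proof uses.
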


\begin{proof}
Suppose there exists a $\Cu^\sim$ isomorphism $\gamma:\Cu_1(A)\longrightarrow \Cu_1(B)$. Then, there is a lattice isomorphism between $\Lat(\Cu_1(A))\simeq\Lat(\Cu_1(B))$ given by $\Cu_1(I)\longmapsto \gamma(\Cu_1(I))$. That is, for any (simple) $I\in\Lat(A)$, there exists a unique (simple) $J_I\in\Lat(B)$ such that $\gamma_{|\Cu_1(I)}:\Cu_1(I)\longrightarrow \Cu_1(J_I)$ is a $\Cu^\sim$-isomorphism. In addition, we know that the following diagram is row-exact and commutative:
\[
\xymatrix{
0\ar[r]^{} & \Cu(I)\ar[d]_{\nu_{+}(\gamma_{|\Cu_1(I)})}\ar[r]^{} & \Cu_1(I) \ar[d]_{\gamma_{|\Cu_1(I)}}\ar[r]^{} & \K_1(I) \ar[d]^{\nu_{max}(\gamma_{|\Cu_1(I)})}\ar[r]^{} & 0
\\
0\ar[r]^{} & \Cu(J_I)\ar[r]_{} & \Cu_1(J_I)\ar[r]_{} & \K_1(J_I)\ar[r]^{} & 0
} 
\] 
On the other hand, we have proved in \autoref{lma:simpleideals} that the simple ideals of $\Cu_1(A)$ and $\Cu_1(B)$) are respectively $\{\Cu_1(\mathfrak{s}_i)\}_{i\in\N}$ and $\{\Cu_1(\mathfrak{t}_i)\}_{i\in\N}$. Therefore, we deduce that for any $i\in\N$ then there exists a unique $j\in\N$ such that $\gamma_{|\mathfrak{s}_i}:\Cu_1(\mathfrak{s}_i)\longrightarrow \Cu_1(\mathfrak{t}_j)$ is a $\Cu^\sim$-isomorphism. Furthermore, by the diagram above, we know that $\gamma_{|\mathfrak{s}_i}$ induces the two following isomorphisms: 
\[
(*)\left\{
    \begin{array}{ll}
    		(\gamma_{|\mathfrak{s}_i})_+:\Cu(\mathfrak{s}_i)\simeq \Cu(\mathfrak{t}_j) \text{ in } \Cu.\\
    		(\gamma_{|\mathfrak{s}_i})_{max}:\K_1(\mathfrak{s}_i)\simeq \K_1(\mathfrak{t}_j) \text{ in AbGp}. 
    \end{array}
\right.
\]
Nevertheless, in the proof of \autoref{cor:K1iso}, we have computed that
\[
\begin{array}{ll}\K_0({\mathfrak{s}_i})\simeq \K_0({\mathfrak{t}_i})\simeq \Z[\frac{1}{q_i}]\\\hspace{-1,7cm}\K_1({\mathfrak{s}_i})\simeq \Z/p_i\Z \hspace{2cm}\K_1({\mathfrak{t}_i})\simeq \Z/p_{i-1}\Z
\end{array}
\]
from which we obtain that
\[
\left\{
    \begin{array}{ll}
    		\K_0(\mathfrak{s}_i)\simeq \K_0(\mathfrak{t}_j) \text{ if and only if } i=j.\\
    		\K_1(\mathfrak{s}_i)\simeq \K_1(\mathfrak{t}_j) \text{ if and only if } i+1=j. 
    \end{array}
\right.
\]
We hence arrive to a contradiction since $j$ has to be equal to both $i$ and $i+1$ in order to satisfy the necessary conditions of $(*)$. We conclude that $\Cu_1(A)\nsimeq\Cu_1(B)$ and hence $A\nsimeq B$.
\end{proof}

\section{Further Remarks} 
\emph{Remark 1.} The alternative picture of the unitary Cuntz semigroup of a $\CatCa$-algebra of stable rank one described in \autoref{prg:newpicture} can be used for any separable $\CatCa$-algebra $A$. The proof given in \cite[\S 4.1]{C21} relies upon the fact that whenever $A$ is separable then $\K_1(i):\K_1(\her(a))\simeq \K_1(I_a)$ for any $a\in (A\otimes\mathcal{K})_+$, where $i:\her(a)\lhook\joinrel\longrightarrow I_a$ is the canonical injection from the hereditary $\CatCa$-algebra generated by $a$ into the ideal generated by $a$. (See \cite[Theorem 2.8]{Br77}.) Nevertheless, the author has been informed by J. Gabe that this is true in general, without having to assume separability. Unfortunately, the proof has not been published but it remains true that the alternative picture of the unitary Cuntz semigroup is valid for any $\CatCa$-algebra of stable rank one.

\emph{Remark 2.} It would be worth knowing whether other existing invariants for non-simple $\CatCa$-algebras would be able to distinguish $A$ and $B$. Among them, we select the followings:

$\bullet$ \emph{The invariant $\K_*$} introduced by Gong, Jiang and Li in \cite{GJL19} and \cite{GJL20}. This invariant based on the original Elliott invariant together with total $\K$-Theory, affine functions from the tracial space of corner algebras and their Hausdorffified $\K_1$-groups, is a complete invariant for $\AH$-algebras with the ideal property and no dimension growth. Thus, it is sufficient to check that $A$ and $B$ belong to the the latter classifiable class of $\CatCa$-algebras. Clearly, both inductive systems have no dimension growth. It would be left to prove that $A$ and $B$ have the ideal property. Nevertheless, the proof of \autoref{lma:simpleideals} gives the impression that this property is verified and hence, there is a fair hope that we could conclude that the invariant $\Inv$ also distinguishes these algebras. Let us mention that $\Inv$ and $\Cu_1$ are quite afar from one another and so are techniques/arguments involved. As a result, it would be more challenging to directly prove that $\Inv(A)\nsimeq \Inv(B)$ due to the complex nature of the invariant $\Inv$.

$\bullet$ \emph{The invariant $\K_{\text{\tiny\varhexagon}}$} introduced by M. R$\o$rdam in \cite{R97}, see also \cite{RR07}. This invariant is based on the $\K$-theory of ideals through the 6-terms exact sequence. It seems that using a similar approach as in the proof of \autoref{thm:pepite}, we would obtain a contradiction when trying to find an isomorphism between hexagonal exact sequences obtained from the canonical extensions of the simple ideals $\{\mathfrak{s}_i\}_i$ and $\{\mathfrak{t}_i\}_i$, to conclude that $\K_{\text{\tiny\varhexagon}}(A)\nsimeq \K_{\text{\tiny\varhexagon}}(B)$. In contrast to $\Inv$, the arguments involved here are similar and at the time of writing, it remains unknown whether the $\Cu_1$-semigroup is strictly stronger than $\K_{\text{\tiny\varhexagon}}$, in the sense that there exist two $\CatCa$-algebras $A$ and $B$ such that $\K_{\text{\tiny\varhexagon}}(A)\simeq \K_{\text{\tiny\varhexagon}}(B)$ and such that $\Cu_1(A)\nsimeq\Cu_1(B)$. It is likely that inspiration can be drawn from the examples constructed in this article to obtain such $\CatCa$-algebras. As a matter of fact, these constructions could give rise to a valuable source of examples and counterexamples in the future, not only to prove that $\Cu_1$ and $\K_{\text{\tiny\varhexagon}}$ are not isomorphic but also to study more in-depth and reveal the information encoded in the unitary Cuntz semigroup.

$\bullet$ \emph{The invariant $\Cu_{\T}$} introduced by Antoine, Dadarlat, Perera and Santiago in \cite{ADPS14}. This invariant is obtained by taking the Cuntz semigroup of the tensor product of $\mathcal{C}(\T)$ with a $\CatCa$-algebra $A$. Under  the condition of simplicity (and other assumptions), this invariant contains the information of the $\K_1$-group of the latter $\CatCa$-algebra, which makes this invariant a candidate for distinguishing $A$ and $B$. Due to the complexity of computation of the Cuntz semigroup, it would be helpful to use the approximate intertwining theorem described in \cite[Theorem 3.15]{C22} to conclude. Depending on whether $\Cu_\T(A)$ is isomorphic to $\Cu_\T(B)$ or not, one should also study the link between these two invariants. 

\emph{Remark 3.} The author is investigating the definition of a semi-metric on \break$\Hom_{\Cu^\sim}(\Cu_1(A),\Cu_1(B))$. Later, one could generalize the approximate intertwining theorem for (concrete) unitary Cuntz semigroups and ensure that $\Cu_1$ distinguishes $A$ and $B$ by another argument.

\emph{Remark 4.} Most recently, the author has been informed of an unpublished manuscript (see \cite{AL22}) where the authors develop a refined version of the Cuntz unitary semigroup that includes the total $\K$-theory. There is no doubt that this new invariant is taking benefits from both invariants $\Inv$ and $\Cu_1$, not to speculate that the two latter can be recovered from the former. Therefore, this \emph{total version} of the unitary Cuntz semigroup seems very promising and it would be worth seeing if this invariant is complete for a certain class of non-simple inductive limits of one-dimensional $\NCCW$ complexes. 

Additionally, it is worth mentioning that the manuscript \cite{AL22} points out a flaw appearing in the very last part of \cite{C21}. Namely, the present author proved in \cite[Theorem 5.20]{C21} that $\K_*$ could be fully recovered from $\Cu_1$. As a direct corollary of the classification results obtained in \cite[Theorems 7.1-7.3-7.4]{E93}, he stated that the unitary Cuntz semigroup classifies homomorphisms of unital $\A\!\T$-algebras of real rank zero and that the unitary Cuntz semigroup is a complete invariant for unital $\AH_d$ algebras of real rank zero. Nevertheless, \cite[Theorem 7.1]{E93} has been disproved some years later in \cite{EGS98}. (See Example 2.19.) We hence take the opportunity to write an erratum.

\section*{Erratum of \texorpdfstring{\cite[Section 5]{C21}}{[10,Section 5]}}

In this erratum, we aim to fix \cite[Corollary 5.21]{C21} and we refer the reader to the said article for definitions and notations. First, it has been observed in \cite[Example 2.2]{AL22} that the unitary Cuntz semigroup of a $\CatCa$-algebra of stable rank one does not have weak cancellation, in general. In other words, \cite[Proposition 5.16]{C21} is not correct. As a consequence, what is following the latter proposition, and in particular the proof of \cite[Theorem 5.20]{C21} (which stipulates that we can fully recover $\K_*$ from $\Cu_1$), is not quite right. To remedy this issue and prove that \cite[Theorem 5.20]{C21} is nonetheless true, we have to introduce a weaker version of weak cancellation for $\Cu^\sim$-semigroups that is satisfied by any $\CatCa$-algebra of stable rank one. Subsequently, we have to adapt the definition of the category $\Cu^\sim_u$, introduced in \cite[Definition 5.17]{C21}, together with the functor $H_*$, introduced in \cite[Lemma 5.19]{C21}. We require the objects of the category $\Cu^\sim_u$ to satisfy a couple of additional axioms (positive convexness, positive weak cancellation and assume that $\{0\}$ sits as an ideal inside $S$). We also slightly modify the positive cone of the ordered abelian group obtained through $H_*$. Thereafter, we are able to argue similarly as in the original proof of \cite[Theorem 5.20]{C21} to conclude that we can fully recover $\K_*$ from $\Cu_1$. 
We start by recalling a couple of definitions and properties that will be of use when redefining the category $\Cu^\sim_u$. We refer the reader to \cite{C21b} for more details.

\begin{dfn*}\cite[Definition 3.3]{C21b}\em \,Let $S$ be a $\Cu^\sim$-semigroup. We define the following axioms: 

(PD) We say that $S$ is \emph{positively directed} if, for any $x\in S$, there exists $p_x\in S$ such that $x+p_x\geq 0$.

(PC) We say that $S$ is \emph{positively convex} if, for any $x,y\in S$ such that $y\geq 0$ and $x\leq y$, we have $x+y\geq 0$.
\end{dfn*}

\begin{prop*}\cite[Proposition 3.12]{C21b} Let $S$ be positively directed and positively convex $\Cu^\sim$-semigroup. Let $x$ be a positive element of $S$. Then the ideal generated by $x$ exists and we have
\[
I_x=\{y\in S \mid \text{ there is } y'\in S \text{ with } 0\leq y+y'\leq\infty x\}
\vspace{-0cm}\]
\end{prop*}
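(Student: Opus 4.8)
The plan is to verify directly that the right-hand set is a $\Cu^\sim$-ideal containing $x$ and that it is contained in every ideal containing $x$; since the excerpt warns that intersections of $\Cu^\sim$-ideals need not be ideals, this direct route (rather than ``intersect all ideals above $x$'') is forced on us. I would first record the basic facts about $\infty x:=\sup_n nx$. As $x\geq 0$, the sequence $(nx)_n$ is increasing by order-compatibility of the addition, so $\infty x$ exists by (O1); using (O4) one gets the idempotency $\infty x+\infty x=\infty x$, and $\infty x\geq x\geq 0$. Write $I_x:=\{y\in S\mid \exists\,y',\ 0\leq y+y'\leq\infty x\}$ for the candidate set. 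Two remarks are immediate: taking $y'=0$ shows $x\in I_x$ (and $0\in I_x$), so the eventual ideal will indeed contain $x$.

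Next I would check that $I_x$ is a submonoid and a lower set. Closure under addition follows by adding witnesses and invoking $\infty x+\infty x=\infty x$. The lower-set property is where positive convexity enters: if $y\leq z$ with $z\in I_x$ witnessed by $z'$ (so $0\leq z+z'\leq\infty x$), I propose the witness $y':=z'+(z+z')$. For the upper bound, $y+y'=(y+z')+(z+z')\leq (z+z')+(z+z')\leq \infty x+\infty x=\infty x$; for the lower bound, apply (PC) to $a:=y+z'\leq b:=z+z'$ with $b\geq 0$, yielding $a+b=y+y'\geq 0$. Hence $y\in I_x$. Here (PD) plays the complementary background role of guaranteeing that every element admits \emph{some} witness, so that the positive-saturation construction is not degenerate.

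The genuinely delicate axiom — and the step I expect to be the main obstacle — is closure under suprema of increasing sequences. Given $(y_n)$ increasing with $y=\sup_n y_n$, all in $I_x$, one must manufacture a \emph{single} witness for $y$, whereas the witnesses $w_n$ of the $y_n$ are a priori uncontrolled. My proposed strategy is to reduce to the positive cone. Using the split-exact sequence $0\to S_+\to S\xrightarrow{\pi} S_{\max}\to 0$ with $\pi=\nu_{\max}$, note that the order induced on the maximal elements is trivial (if $y\leq z$ with $z$ maximal then $z=y$), so the order-preserving image $(\pi(y_n))_n$ is constant, say equal to $g$; consequently $\pi(y)=g$ as well. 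Choosing $w$ with $\pi(w)=-g$ (for instance $w:=w_0$, or a section $w:=\sigma(-g)$, which exists by (PD)), every $y_n+w$ lies in $\ker\pi=S_+$, and $y+w=\sup_n(y_n+w)$ by (O4). It then remains to see that these positive elements are bounded by $\infty x$, i.e.\ that they lie in the set $K:=\{z\in S_+\mid z\leq\infty x\}$, which is a genuine $\Cu$-ideal of the $\Cu$-semigroup $S_+$ and hence Scott-closed there; passing the bound from the $y_n$ to the supremum $y+w$ is exactly the crux, and I would handle it by comparing the fixed witness $w$ with the $w_n$ modulo $S_+$ and exploiting the heredity of $K$. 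Positive stability of $I_x$ is by contrast essentially built into the formula: $I_x$ is precisely the positive-saturation of its positive cone $(I_x)_+=K$, which is the content of the positive-stability axiom of \cite[Definition 3.9]{C21b}.

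Finally I would prove minimality. Let $J$ be any $\Cu^\sim$-ideal with $x\in J$. Since $J$ is a submonoid, $nx\in J$ for all $n$, and since $J$ is closed under increasing suprema, $\infty x=\sup_n nx\in J$. Now for $y\in I_x$ with witness $y'$ we have $0\leq y+y'\leq\infty x$, so $y+y'\in J$ because $J$ is a lower set; positive stability of $J$ (again \cite[Definition 3.9]{C21b}), which recovers an element from a positive sum lying in $J$, then gives $y\in J$. Thus $I_x\subseteq J$, proving that $I_x$ is the smallest ideal containing $x$. Combining the two halves, the ideal generated by $x$ exists and is given by the stated formula.
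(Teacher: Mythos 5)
The paper does not reprove this statement (it is quoted from \cite[Proposition 3.12]{C21b}), so your proposal has to stand on its own. Most of it does: the facts about $\infty x$, the containment $x\in I_x$, closure under addition via $\infty x+\infty x=\infty x$, the lower-set argument with the witness $y':=z'+(z+z')$ (a correct and rather elegant use of (PC)), and the minimality argument via $\infty x=\sup_n nx\in J$, the lower-set property, and positive stability of $J$ are all sound; positive stability of $I_x$ itself is also fine (if $0\leq y+y'\in I_x$ with witness $w$, then $y'+w$ witnesses $y$), although your formulation via the identity $(I_x)_+=K$ is itself unproven and unnecessary. Two small slips: triviality of the order on $S_{\max}$ follows from maximality of the \emph{smaller} element, not of $z$; and (PD) does not ``guarantee that every element admits some witness'' --- it gives $y+p_y\geq 0$ with no control by $\infty x$, and indeed not every element lies in $I_x$.

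The genuine gap is exactly where you flag it: closure under suprema of increasing sequences, and your proposed route through the sequence $0\to S_+\to S\to S_{\max}\to 0$ cannot close it. First, that sequence is \cite[Theorem 4.13]{C21b}, proved \emph{after} Proposition 3.12 and under additional hypotheses; for an arbitrary positively directed, positively convex $\Cu^\sim$-semigroup there is no guarantee that $S_{\max}$ is nonempty, forms a group, or admits a section (the section comes from split-exactness, not from (PD)), so invoking it here is both unavailable and structurally circular. Second, even where the sequence exists, exactness is in the quotient sense $S/S_+\simeq S_{\max}$, not the kernel sense: $\pi^{-1}(0)$ is strictly larger than $S_+$ in general, because the transition maps $\delta_{IJ}$ on $\K_1$ need not be injective --- precisely the phenomenon the erratum of this paper isolates (injectivity of $\delta_{IJ}$ holds only under real rank zero, and its failure is why weak cancellation fails). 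So the step ``every $y_n+w$ lies in $\ker\pi=S_+$'' is false as stated. Third, even granting positivity of the $y_n+w$, the upper bound $y_n+w\leq\infty x$ for a \emph{single} $w$ is the real crux, and your text explicitly defers it (``I would handle it by comparing \dots''); no argument is given. Note that positivity is in fact not the obstruction at all: by (PC), $y\in I_x$ if and only if there exists $v$ with $y+v\leq\infty x$ (given such $v$, the witness $y':=v+\infty x$ satisfies $0\leq y+y'\leq\infty x$, since $y+v\leq\infty x\geq 0$ gives $(y+v)+\infty x\geq 0$). This reduces Scott-closedness to uniformizing the upper-bound witnesses $v_n$ across the sequence $(y_n)$ --- if the $v_n$ could be chosen increasing one would conclude by (O4) --- and that uniformization is the missing idea your proposal does not supply.
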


It has been shown that $\Cu_1(A)$ satisfies axioms (PD) and (PC), for any $\CatCa$-algebra $A$ of stable rank one, and we easily see that that $I_{0_{\Cu_1(A)}}=\{0\}$. All these properties will be needed in the new proof of \cite[Theorem 5.20]{C21}. Nevertheless, they need not hold for abstract $\Cu^\sim$-semigroups. While it seems obvious for axioms (PD) and (PC), the last property is quite surprising. We here expose an example of a positively directed and positively convex $\Cu^\sim$-semigroup whose ideal generated by its neutral element is strictly larger than $\{0\}$.
Let $S:={\overline{\N}}\times\Z$ endowed with the component-wise sum and we define a partial order on $S$ as follows: for any two pairs $(g,k),(h,l)$ we have that $(g,k)\leq (h,l)$ if $g\leq h $ in ${\overline{\N}}$ and $k=l$ in $\Z$. Notice that $S_+= {\overline{\N}} \times \{0\}$. One can check that $(S,+,\leq)$ is a countably-based positively directed and positively convex $\Cu^\sim$-semigroup and by the above proposition, we compute that $I_0=\{0\}\times\Z$.

We proceed by introducing a weaker version of weak cancellation, respectively of cancellation of compact elements.

\begin{dfn*}
\em Let $S$ be a $\Cu^\sim$-semigroup. We define the following axioms:

(PWC) \hspace{0,3cm} We say that $S$ has \emph{positive weak cancellation} if $x+z\ll z$ implies that $x\ll 0$, for any $x,z\in S$.

(PCC) \hspace{0,4cm} We say that $S$ has \emph{positive cancellation of compact elements} if $x+z\leq z$ implies that $x\leq 0$, for any $x\in S$ and $z\in S_c$.
\end{dfn*}

\begin{rmk*}
\em It can be shown that these axioms are satisfied by the unitary Cuntz semigroup of any $\CatCa$-algebra of stable rank one and that axiom (PWC) implies axiom (PCC).
\end{rmk*}
We have now the right tools to adjust the category $\Cu^\sim_u$ and the functor $H_*$ that will allow us to recover $\K_*$ from $\Cu_1$. 

$\bullet\,$[The category $\Cu^\sim_u$] We say that a pair $(S,u)$ is a \emph{$\Cu^\sim$-semigroup with compact order-unit} if, $S$ is a positively directed, positively convex $\Cu^\sim$-semigroup that satisfies axiom (PWC) and such that $I_0=\{0\}$, and if $u$ is a compact order-unit for the positive cone $S_+$.  We say that a $\Cu^\sim$-morphism $\alpha:S\longrightarrow T$ between two $\Cu^\sim$-semigroups with compact order-unit $(S,u)$ and $(T,v)$ \emph{preserves the compact order-unit} if, $\alpha(u)\leq v$. The category of \emph{$\Cu^\sim$-semigroups with compact order-unit}, denoted $\Cu^\sim_u$, consists of $\Cu^\sim$-semigroups with compact order-unit together with $\Cu^\sim$-morphisms that preserve the compact order-unit. 

$\bullet\,$[The functor $H_*$] We consider the assignment 
\[
	\begin{array}{ll}
	\hspace{0cm} H_*:\Cu^\sim_{u}\longrightarrow \AbGp_{u}\\
	\hspace{0,6cm} (S,u)\longmapsto (\Gr(S_{c}),\iota(S_{c}),u)\\
		\hspace{1,22cm} \alpha \longmapsto \Gr(\alpha_{c})
	\end{array}
\] 
where $\iota:S_c\longrightarrow \Gr(S_c)$ is given by $x\longmapsto [(x,0)]_{\Gr(S_c)}$. 

First, we have to check that $(\Gr(S_{c}),\iota(S_{c}))$ is an ordered group. The fact that $\Gr(S_{c})=\iota(S_{c})-\iota(S_{c})$ is immediate from the Grothendieck construction. The non-trivial point is to prove that $\iota(S_{c})\cap(-\iota(S_{c}))=\{0\}$. Take $g\in \iota(S_{c})\cap(-\iota(S_{c}))$. There exist $x,y\in S_c$ such that $g=[(x,0)]=[(0,y)]$. It suffices to show that $x=y=0$. We start by observing that there exists $z\in S_c$ such that $x+y+z=z$ in $S_c$. Using axiom (PCC), we get that $x+y\leq 0$ which leads to $x+y=0$, by axiom (PC). Now using the fact that $I_0=\{x\mid \exists y, x+y=0\}=\{0\}$, we deduce that $x=y=0$. Therefore, we conclude that $(\Gr(S_{c}),\iota(S_{c}))$ is an ordered group. 
Next, arguing as in the proof of \cite[Lemma 5.19]{C21}, we know that $\alpha_c:S_c\longrightarrow T_c$ is a monoid morphism that induces a group morphism $\Gr(\alpha_c):\Gr(S_c)\longrightarrow \Gr(T_c)$. Moreover, for any element $[(x,0)]\in\iota(S_c)$, it is immediate that $\Gr(\alpha_c)([(x,0)])=[(\alpha_c(x),0)]$ belongs to the positive cone of $\Gr(T_c)$. In other words, $\Gr(\alpha_c)(\iota(S_c))\subseteq \iota(T_c)$. As a result $H_*:\Cu_u^\sim\longrightarrow \AbGp_u$ is a well-defined functor. 

$\bullet\,$[The natural isomorphism $\eta_*:H_*\circ\Cu_{1,u}\simeq \K_*$]
We first show that for any unital $\CatCa$-algebra of stable rank one $A$, there is a monoid isomorphism $\iota(\Cu_1(A)_{c})\simeq \K_*(A)_+$ and the result will follow. Since $A$ has stable rank one, we know that any element $x\in \Cu_1(A)_c$ is of the form $[(p,u)]$ for some projection $p\in A\otimes\mathcal{K}$ and some unitary $u\in \her(p)$. Hence we can construct the following exhaustive monoid morphisms:
\[
\begin{array}{ll}
\iota:\Cu_1(A)_c\relbar\joinrel\twoheadrightarrow\iota(\Cu_1(A)_{c})\hspace{3cm}\gamma:\Cu_1(A)_c\relbar\joinrel\twoheadrightarrow{\K_*(A)}_+\\
\hspace{0,7cm}[(p,u)]\longmapsto [(\,[(p,u)],[(0_A,1_\mathbb{C})]\,)]_{\Gr}\hspace{2cm}[(p,u)]\longmapsto ([p],[u+(1-p)]_{\K_1(A)})
\end{array}
\]
In order to reach the desired isomorphism, we first prove that 
\[
\iota([(p,u)])=\iota([(q,v)]) \text{ if and only if } \gamma([(p,u)])=\gamma([(q,v)]).
\] 
Let $[(p,u)]$ and $[(q,v)]$ be in $\Cu_1(A)_c$. We know that $\iota([(p,u)])=\iota([(q,v)])$ if and only if there exists $[(r,w)]\in \Cu_1(A)_c$ such that $[(p,u)]+[(r,w)]=[(q,v)]+[(r,w)]$ if and only if there exists $[(r,w)]\in \Cu_1(A)_c$ such that $[(p,u)]+[(r,w)]+[(1_A,1_A)]=[(q,v)]+[(r,w)]+[(1_A,1_A)]$. 

That is, $\iota([(p,u)])=\iota([(q,v)])$ if and only if there exists $[(r,w)]\in \Cu_1(A)_c$ such that $[r]\geq [1_A]$ and such that $[(p,u)]+[(r,w)]=[(q,v)]+[(r,w)]$. Which is in turn equivalent to 
\[
	(*)\left\{\begin{array}{ll}
[p]+[r]=[q]+[r] \text{ in} \Cu(A)\\
	\delta_{I_pA}([u]_{\K_1(I_p)})+[w]=\delta_{I_qA}([v]_{\K_1(I_q)})+[w] \text{ in } \K_1(A)
	\end{array}\right.
\] 
On the other hand, $\Cu(A)$ has cancellation of projections since $A$ has stable rank one. Together with the fact that $\K_1(A)$ is an abelian group, we deduce that (*) is also equivalent to
\[
\left\{\begin{array}{ll}
[p]=[q]\text{ in} \Cu(A)\\
\,[u+(1-p)]=[v+(1-p)]\text{ in } \K_1(A)
	\end{array}\right.
\] 
Thus, we have arrived to the desired equivalence. Combined with the exhaustiveness of the maps $\iota$ and $\gamma$, we can (explicitly) construct two monoid morphisms that are inverses of one another, as follows: 
\[
\xymatrix{
\Cu_1(A)_c\ar@{>>}[dr]_{\gamma}\ar@{>>}[rr]^{\iota} &&\iota(\Cu_1(A)_{c})\ar@<0,5ex>@{-->}[dl]^\simeq\\
& {\K_*(A)}_+\ar@<0,5ex>@{-->}[ur]_{} &
}
\]
As of now, using the fact that any element of an ordered group can be uniquely decomposed as the sum of elements of its positive cone (or rather, a difference), we can extend the isomorphism obtain between the positive cones to a (natural) ordered group isomorphism $(K_*(A),K_*(A)_+,[1_A])\simeq (\Gr(\Cu_1(A)_c),\iota(\Cu_1(A)_c),[1_A])$. Therefore, we conclude that the functor $H_*$ yields a natural isomorphism $\eta_*:H_*\circ\Cu_{1,u}\simeq \K_*$.

$\bullet\,$[A faithful restriction of $H_*$] The last step is to show that the restriction of $H_*$ to the (full) subcategory $\Cu^\sim_{u,\alg,w}$ of algebraic $\Cu^\sim$-semigroups with compact order unit satisfying weak cancellation yields a faithful functor. Let us consider two $\Cu^\sim_u$-morphisms $\alpha,\beta:(S,u)\longrightarrow (T,v)$ between two algebraic $\Cu^\sim_u$-semigroups and we assume that $H_*(\alpha)=H_*(\beta)$. In particular, for any $x\in S_c$ we have $[(\alpha_c(x),0)]=[(\beta_c(x),0)]$ in $\iota(T_c)$. Equivalently, for any $x\in S_c$ there exists $t\in S_c$ such that $\alpha_c(x)+t=\beta_c(x)+t$. Using weak cancellation, we obtain that $\alpha_c=\beta_c$ and since any element in $S$ is the supremum of an increasing sequence of compact elements, we deduce that $\alpha=\beta$. \\

Finally, it can be shown that the unitary Cuntz semigroup of any $\CatCa$-algebra of stable rank one and real rank zero satisfies weak cancellation. (See \cite[Proposition 2.5]{AL22}.) For the sake of completeness, we mention that the proof entirely relies on the following fact: for a $\CatCa$-algebra of real rank zero (and stable rank one) and any $I,J\in\Lat(A)$ such that $I\subseteq J$, the induced $\K_1$-morphism $\delta_{IJ}$ is also injective.

Putting all the pieces together, we conclude that when restricting to the (full) subcategory of $\CatCa$-algebras of stable rank one and real rank zero, we can fully recover $\K_*:\CatCa_{\sr1,\rr0}\longrightarrow \AbGp_u$ from $\Cu_{1,u}:\CatCa_{\sr1,\rr0}\longrightarrow \Cu^\sim_{u,alg,w}$ through $H_*$. Let us mention that the converse is also true and proven in \cite[Theorem 2.11]{AL22}.

Lastly, we restate the notable (and correct) classifications results by means of $\K_*$ that will lead to the corrected version of \cite[Corollary 5.21]{C21}.

\begin{thm*}\textnormal{(}\cite[Corollary 4.9]{EG96} - \cite[Theorem 3.21]{EGS98}, \cite[Theorem 7.3 - Theorem 7.4]{E93}\textnormal{)}

(i) The functor $\K_*$ is a complete invariant for simple unital $AH_{d}$ algebras of real rank zero.

(ii) Let $A,B$ be (unital) $\A\!\T$ algebras of real rank zero and let $\alpha:\K_*(A)\longrightarrow \K_*(B)$ be a scaled ordered group morphism. Then there exists a unique $^*$-homomorphism (up to approximate unitary equivalence) $\phi:A\longrightarrow B$ such that $\K_*(\phi)=\alpha$.
\end{thm*}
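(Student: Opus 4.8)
Both items are statements from the Elliott classification program, so the plan is to indicate how they are established rather than to deduce them from the preceding material. In each case the route is the same: I would reduce the assertion to an \emph{existence} theorem and a \emph{uniqueness} theorem for $^*$-homomorphisms between building blocks, and then assemble these by an approximate intertwining.

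For (ii), the building blocks of an $\A\!\T$ algebra are the circle algebras $\mathcal{C}(\T)\otimes F$ with $F$ finite-dimensional, and the real rank zero hypothesis furnishes an abundance of projections in the limit. First I would prove existence: given a scaled ordered group morphism $\alpha:\K_*(A)\longrightarrow\K_*(B)$, one realises $\alpha$ approximately on the finite stages by $^*$-homomorphisms between circle algebras whose spectral multiplicities are prescribed by the $\K_0$-data and whose behaviour on the $\K_1$-part matches $\alpha$; real rank zero is used precisely to split the spectrum into point evaluations carrying the $\K_0$-information. Next I would prove uniqueness: any two $^*$-homomorphisms between circle algebras inducing the same map on $\K_*$ are approximately unitarily equivalent, the crux being that real rank zero lets one absorb the comparison of positive elements into $\K$-theory and control spectra by projections. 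Alternating existence and uniqueness between the two sequences yields a two-sided approximate intertwining, and the Elliott intertwining lemma then produces the $^*$-homomorphism realising $\alpha$, unique up to approximate unitary equivalence.

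For (i) the same scheme applies with homogeneous building blocks $\mathcal{C}(X)\otimes M_n$ of uniformly bounded dimension. Simplicity together with real rank zero forces strict comparison and a totally disconnected behaviour of the limiting spectrum, so that the scaled ordered group $\K_*$ again encodes enough to run the existence and uniqueness arguments; the resulting approximate intertwining shows that $\K_*$ is a complete invariant for this class.

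The hard part will be the uniqueness theorem at the building-block level, namely that two homomorphisms with equal $\K_*$-data are approximately unitarily equivalent. This is the analytic core: it demands fine control of the spectrum of a homomorphism and a systematic exploitation of real rank zero to convert order comparison of positive elements into $\K$-theoretic comparison, the bounded-dimension hypothesis in (i) being exactly what keeps the homotopy of the spectra under control.
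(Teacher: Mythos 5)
The paper offers no proof of this statement at all: it is imported verbatim from the literature, with the citations \cite{EG96}, \cite{EGS98} for (i) and \cite{E93} for (ii) standing in for the argument, since the theorem is only needed as input for the corrected Corollary via the natural isomorphism $H_*\circ \Cu_{1,u}\simeq \K_*$ established in the erratum. Your sketch is therefore not in competition with anything in the paper; what it does is correctly reproduce the architecture of the cited proofs --- building-block existence and uniqueness theorems assembled through Elliott's two-sided approximate intertwining --- and at that level it is a fair description of how (i) and (ii) are actually established. Be clear, though, that what you have written is a scaffold and not a proof: the steps you defer (``I would prove existence\dots'', ``I would prove uniqueness\dots'') constitute the entire analytic content of \cite{E93} and \cite{EG96}, which run to hundreds of pages, so in the context of this paper the honest move is exactly what the author does, namely cite.

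Two of your glosses on (i) deserve correction. Simplicity and real rank zero do not force ``strict comparison and a totally disconnected behaviour of the limiting spectrum''; the relevant mechanism in the real rank zero setting is the small variation of eigenvalue patterns, combined with the reduction theorem of \cite{EGS98} --- announced in its very title --- which brings the \emph{local} spectra down to dimension at most two, and these include genuinely two-dimensional spaces (such as $S^2$ and the spaces $T_{I\!I,k}$), not totally disconnected ones. Relatedly, the uniqueness theorems at the building-block level are not run with $\K_*$ alone: because such spectra carry torsion, the intermediate arguments require finer data (K-theory with coefficients and KK-type invariants), even though $\K_*$ with order and scale ends up being the complete invariant for the simple class. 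Neither point invalidates your outline, but they are where your sketch and the actual proofs part ways; for (ii), your description of the circle-algebra existence/uniqueness/intertwining scheme matches \cite{E93} accurately.
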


\begin{cor*}
(i) The functor $\Cu_{1,u}$ is a complete invariant for simple unital $AH_{d}$ algebras of real rank zero.

(ii) The functor $\Cu_{1,u}$ classifies homomorphisms of unital $\A\!\T$ algebras of real rank zero.
\end{cor*}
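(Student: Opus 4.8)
The plan is to combine three ingredients already assembled in this erratum: the natural isomorphism $\eta_*\colon H_*\circ\Cu_{1,u}\simeq \K_*$, the faithfulness of $H_*$ on the subcategory $\Cu^\sim_{u,\alg,w}$, and the classification results for $\K_*$ recalled in the preceding theorem. The first thing I would check is that the algebras under consideration are sent by $\Cu_{1,u}$ into $\Cu^\sim_{u,\alg,w}$, so that both $\eta_*$ and the faithfulness statement apply to them. Both simple unital $AH_d$ algebras of real rank zero and unital $\A\!\T$ algebras of real rank zero have stable rank one, so $\Cu_{1,u}$ is defined on them; being unital furnishes a compact order-unit; and the real rank zero hypothesis guarantees, via \cite[Proposition 2.5]{AL22}, that their unitary Cuntz semigroups satisfy genuine weak cancellation and are algebraic. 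Hence $\Cu_{1,u}(A)$ and $\Cu_{1,u}(B)$ genuinely lie in $\Cu^\sim_{u,\alg,w}$.

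For (i), the forward implication is mere functoriality of $\Cu_{1,u}$. For the converse, suppose $\gamma\colon\Cu_{1,u}(A)\longrightarrow\Cu_{1,u}(B)$ is a $\Cu^\sim_u$-isomorphism. Applying $H_*$ and conjugating by the components of $\eta_*$ yields a scaled ordered group isomorphism $\alpha:=\eta_{*,B}\circ H_*(\gamma)\circ \eta_{*,A}^{-1}\colon \K_*(A)\longrightarrow\K_*(B)$. Since $\K_*$ is a complete invariant for this class by part (i) of the preceding theorem, we conclude $A\simeq B$.

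For (ii), given a $\Cu^\sim_u$-morphism $\gamma\colon\Cu_{1,u}(A)\longrightarrow\Cu_{1,u}(B)$, I set $\alpha:=\eta_{*,B}\circ H_*(\gamma)\circ\eta_{*,A}^{-1}\colon\K_*(A)\longrightarrow\K_*(B)$, a scaled ordered group morphism. By part (ii) of the preceding theorem there is a $^*$-homomorphism $\phi\colon A\longrightarrow B$, unique up to approximate unitary equivalence, with $\K_*(\phi)=\alpha$. To see that $\phi$ lifts $\gamma$, I would use naturality of $\eta_*$ together with $\K_*(\phi)=\alpha$ to obtain $H_*(\Cu_{1,u}(\phi))=H_*(\gamma)$; faithfulness of $H_*$ on $\Cu^\sim_{u,\alg,w}$ then forces $\Cu_{1,u}(\phi)=\gamma$. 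For uniqueness, if $\psi\colon A\longrightarrow B$ also satisfies $\Cu_{1,u}(\psi)=\gamma$, the same naturality gives $\K_*(\psi)=\alpha$, whence $\psi$ is approximately unitarily equivalent to $\phi$.

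The main obstacle, and indeed the whole reason the original argument required correction, is the step placing the invariants inside the faithful subcategory $\Cu^\sim_{u,\alg,w}$. Faithfulness of $H_*$ was established only under genuine weak cancellation, which, as \cite[Example 2.2]{AL22} shows, fails for general stable rank one; the erratum's weaker axioms (PWC)/(PCC) suffice to make $H_*$ \emph{well-defined} but not \emph{faithful}. The real rank zero hypothesis is precisely what restores weak cancellation, through the injectivity of the connecting maps $\delta_{IJ}$. Without it the recovery of $\K_*$ from $\Cu_{1,u}$ at the level of morphisms breaks down, which is exactly the gap in the original \cite[Corollary 5.21]{C21}.
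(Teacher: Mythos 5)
Your proposal is correct and takes essentially the same approach as the paper, which deduces the corollary by combining the natural isomorphism $\eta_*\colon H_*\circ\Cu_{1,u}\simeq \K_*$, the faithfulness of $H_*$ on the subcategory $\Cu^\sim_{u,\alg,w}$ (available in the real rank zero case by \cite[Proposition 2.5]{AL22}), and the recalled $\K_*$-classification theorems. One minor quibble: the paper cites \cite[Proposition 2.5]{AL22} only for weak cancellation, whereas algebraicity of $\Cu_1(A)$ for real rank zero $A$ is a separate (standard) fact; it does hold, so your argument is unaffected.
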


\end{document}